\documentclass[11pt]{amsart}

\usepackage{amsmath,amssymb,amsthm}
\usepackage[mathscr]{euscript}
\usepackage{mathtools}

\usepackage[margin=1in]{geometry}

\theoremstyle{definition}
\newtheorem{theorem}{Theorem}[section]
\newtheorem{proposition}[theorem]{Proposition}

\newtheorem{example}[theorem]{Example}
\newtheorem{conjecture}[theorem]{Conjecture}
\newtheorem{lemma}[theorem]{Lemma}
\newtheorem{definition}[theorem]{Definition}

\newcommand{\RR}{\mathbb{R}}

\newcommand{\ZZ}{\mathbb{Z}}

\newcommand{\polyp}{\mathcal{P}}
\newcommand{\init}{\mathrm{in}_{\prec}}
\newcommand{\insn}{\mathrm{in}_{\prec_{S_n}}}

\newcommand{\inrevlex}{\mathrm{in}_{\prec_{\mathrm{rlex}}}}

\newcommand{\lp}{\mathcal{L}_{\polyp}}
\newcommand{\den}{\mathrm{den~}}
\newcommand{\grob}{\mathscr{G}}
\newcommand{\ordersn}{\prec_{S_n}}

\begin{document}

\title{Ehrhart Series of Polytopes Related to Symmetric Doubly-Stochastic Matrices}
\author{Robert Davis}
\address{Department of Mathematics\\
         University of Kentucky\\
         Lexington, KY 40506--0027}
\email{davis.robert@uky.edu}
\date{23 March 2015}

\thanks{
The author is partially supported by a 2013-2014 Fulbright U.S. Student Fellowship.
The author thanks Benjamin Braun, Alexander Engstr\"om, Matthew Zeckner, and the referee for their valuable comments and insights.
}

\begin{abstract}
	In Ehrhart theory, the $h^*$-vector of a rational polytope often provide insights into properties of the polytope that may be otherwise obscured. As an example, the Birkhoff polytope, 
	also known as the polytope of real doubly-stochastic matrices, has a unimodal $h^*$-vector, but when even small modifications are made to the polytope, the same property can be very difficult to prove. 
	In this paper, we examine the $h^*$-vectors of a class of polytopes containing real doubly-stochastic symmetric matrices.
\end{abstract}

\maketitle

\section{Introduction} \label{sec:intro}

	For a rational polytope $\polyp \subseteq \RR^n$ of dimension $d$, consider the counting function $\lp(m) = |m\polyp \cap \ZZ^n|$, where $m\polyp$ is the $m$-th dilate of $\polyp$.
	The \emph{Ehrhart series} of $\polyp$ is
	\[
	E_{\polyp}(t) := 1 + \sum_{m\in \ZZ_{\geq 1}} \lp(m)t^m \, .
	\]
	Let $\den\polyp$ denote the least common multiple of the denominators appearing in the coordinates of the vertices of $\polyp$.
	Combining two well-known theorems due to Ehrhart \cite{Ehrhart} and Stanley \cite{StanleyDecompositions}, there exist values $h_0^*,\ldots,h_k^*\in \ZZ_{\geq 0}$ with $h_0^*=1$ such that
	\[
	E_\polyp(t)=\frac{\sum_{j=0}^kh_j^*t^j}{(1-t^{\mathrm{den~}\polyp})^{d+1}} \, .
	\]
	We say the polynomial $h^*_\polyp(t):=\sum_{j=0}^kh_j^*t^j$ is the \emph{$h^*$-polynomial} of $\polyp$ (sometimes referred to as the $\delta$-polynomial of $\polyp$) and the vector of 
	coefficients $h^*(\polyp)$ is the \emph{$h^*$-vector} of $\polyp$.
	That $E_\polyp(t)$ is of this rational form is equivalent to $|m\polyp\cap \ZZ^n|$ being a quasipolynomial function of $m$ of degree at most 
	$d$; the non-negativity of the $h^*$-vector is an even stronger property. If $\den\polyp \neq 1$ then the form of $E_\polyp(t)$ above may not be fully reduced, 
	yet we still refer to the coefficients of this form when discussing $h^*(\polyp)$. Even more tools are available when $\polyp$ is a lattice polytope, that is, when its
	vertices are integral. 
	
	Recent work has focused on determining when $h^*(\polyp)$ is unimodal, that is, when there exists some $j$ for which $h_0^* \leq \cdots \leq h_j^* \geq \cdots \geq h_k^*$.
	The specific sequence in question may not be of particular interest, but unimodal behavior often suggests an underlying structure that may not be immediately apparent.
	Thus, the proofs of various $h^*$-vectors being unimodal are often more enlightening than the sequences themselves. 
	There are a number of approaches possible for proving unimodality, taken from fields such as Lie theory, algebraic statistics, and others \cite{stanleylogconcave}.
	
	In this paper, we consider a variation of the Birkhoff polytope, which is defined as follows. 
	\begin{definition}
		The {\em Birkhoff polytope} is the set of $n\times n$ matrices with real nonnegative entries such that each row and column sum is 1.
	\end{definition}
	We denote this polytope by $B_n$ and note that it is also often referred to as the polytope of real $n\times n$ doubly-stochastic matrices or the polytope of $n\times n$ magic squares.
	The vertex description of $B_n$ is due to the Birkhoff-von Neumann theorem, which finds that $B_n$ is the convex hull of the permutation matrices. 
	The $h^*$-vector of the Birkhoff polytope is difficult to compute in general, and is known only for $n \leq 9$; its volume only for $n \leq 10$ \cite{beckpixton}. 
	As limited as the data is, it has still been shown that $h^*(B_n)$ is symmetric as well as unimodal \cite{athanasiadisbirkhoff, stanley1973, stanley1976}.
	
	On the other hand, little is known about the polytope $\Sigma_n$ obtained by intersecting $B_n$ with the hyperplanes $x_{ij} = x_{ji}$ for all $i, j$, that is, by 
	requiring the corresponding matrices to be symmetric.
	Nothing is new when $n\leq2$, but complications arise once $n \geq 3$ since the vertices of $\Sigma_n$ are no longer always integral. 
	They are contained in the set
	\[
		L_n = \left\{ \frac{1}{2}(P + P^T) | P \in \RR^{n\times n} \mathrm{~is~a~permutation~matrix}\right\},
	\]
	but $L_n$ is not necessarily equal to the vertices of $\Sigma_n$. A description of the vertices and a generating function for the number of them can be found in \cite{StanleyVol2}.
	In \cite{StanleyGreenBook}, Stanley shows that the dimension of $\Sigma_n$ is ${n \choose 2}$ (whereas the dimension of $B_n$ is $(n-1)^2$); 
	he also shows that the $h^*$-vector of $\Sigma_n$ is symmetric and in \cite{StanleyVol1Ed2} computes $E_{\Sigma_n}(t)$ in a reduced form for some small $n$,
	but it is still unknown whether the $h^*$-vector is unimodal in this case. 

	\begin{definition}
		Denote by $S_n$ the polytope containing all real $n\times n$ symmetric matrices with nonnegative entries such that every row and column sum is 2.
		That is, $S_n$ is the dilation of $\Sigma_n$ by two.
	\end{definition}
	Fortunately, some information about $\Sigma_n$ (such as dimension) is retained by $S_n$, a polytope that is combinatorially equivalent but with integral vertices.

	The main purpose of this paper is to examine what happens when trying to prove that $h^*(S_n)$ is unimodal by adapting the techniques used to prove that $h^*(B_n)$ is unimodal.
	Several key ingredients translate nicely to the context of $S_n$, but mysteries remain when examining its toric ideal and certain Gr\"obner bases of it, notions that will be made more precise in 
	Section~\ref{sec:triangulations}. 
	In this direction, we will show the following.
	
	\begin{theorem}\label{thm:mainthm}
		For all $n$, let $I_{S_n}$ denote the toric ideal of $S_n$. The following properties hold:
		\begin{enumerate}
			\item For any term ordering, every element of the reduced Gr\"obner basis $\grob$ of $I_{S_n}$  with respect to this order consists of binomials, one monomial of which is squarefree.
			\item For any term ordering, every variable in $I_{S_n}$ appears in a degree-two binomial in $\grob$.
			\item There exists a class of term orders $\ordersn$ for which the initial term of each degree-two binomial in $\grob$ is squarefree.
			\item For the term orders $\ordersn$, the initial term $\insn(g)$ of each $g \in \grob$ is cubefree, that is, $\insn(g)$ is not divisible by $t_i^3$ for any variable $t_i$ appearing in $g$.
		\end{enumerate}
	\end{theorem}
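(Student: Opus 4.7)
The lattice points of $S_n$ correspond to $2$-regular multigraphs on $[n]$, where loops count twice toward degree: a symmetric nonnegative integer matrix with row sums $2$ encodes such a multigraph, and the vertices of $S_n$ are those of the form $P+P^T$. A binomial $x^u - x^v \in I_{S_n}$ thus corresponds to two multisets of vertices whose underlying multigraphs sum to the same edge-multiplicity function. I would begin by recording the basic \emph{swap moves}: given vertices $G_1,G_2$ of $S_n$ and a common vertex $i$ with a choice of two half-edges in $G_1 \sqcup G_2$ at $i$, exchanging those half-edges produces $G_1',G_2'$ with $G_1+G_2=G_1'+G_2'$. These moves are the fundamental combinatorial source of binomials in $I_{S_n}$.

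For part~(1), I would argue by contradiction. Suppose $x^u-x^v\in\grob$ with neither monomial squarefree, say $x_a^2\mid x^u$ and $x_b^2\mid x^v$ with corresponding vertices $G_a,G_b$. Two copies of $G_a$ on one side and two copies of $G_b$ on the other form, on each side, a $4$-regular multigraph that always admits a nontrivial swap move; this yields a strictly shorter binomial in $I_{S_n}$ whose initial monomial divides $\insn(x^u-x^v)$, contradicting the reducedness of $\grob$. Part~(2) then follows by a direct existence argument: for any vertex $G$ one can produce a second vertex $G'$ and a pair $\{H_1,H_2\}\ne\{G,G'\}$ with $G+G'=H_1+H_2$ via a single swap, giving a degree-two binomial in $I_{S_n}$ containing $x_G$; a minimal-generating argument forces at least one such binomial into any reduced Gr\"obner basis.

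For parts~(3) and~(4) one must design the orders $\prec_{S_n}$. I would use a weight order in which each vertex whose multigraph is a union of loops and doubled edges (i.e.\ the image of an involution) receives strictly greater weight than every vertex containing a cycle of length $\ge 3$, with a lexicographic tie-break within each weight class. Under this order, any degree-two binomial having $x_G^2$ on one side must have the form $x_{H_1}x_{H_2}-x_G^2$ with $G$ of cycle type and $H_1,H_2$ of involution type, since doubling a cycle-type multigraph splits naturally into two involution-type vertices; this gives~(3). For~(4) I would argue inductively on degree: if $x_i^3\mid\insn(g)$ for some $g\in\grob$, then three copies of the vertex $G_i$ on the leading side, together with three vertices from the trailing side, form a $6$-regular multigraph on each side that admits a swap-move re-decomposition producing a proper sub-binomial whose initial term divides $\insn(g)$, contradicting reducedness. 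The main obstacle lies in (4): one must verify that every $6$-regular multigraph arising as three copies of a vertex admits a partial re-decomposition compatible with the chosen weights, and it is here that the fine-tuning of the tie-breaking rule inside the family $\prec_{S_n}$ is most delicate.
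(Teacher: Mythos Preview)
Your outline diverges from the paper in two substantive ways, and both create real gaps.

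\textbf{The term order.} The paper's family $\prec_{S_n}$ is defined by counting the number of entries equal to $2$ in each lattice-point matrix: $M<_{S_n}N$ if $M$ has strictly more $2$'s than $N$, with any linear extension, and then graded reverse lex. Your proposed weight order (involution-type multigraphs above multigraphs containing a cycle of length $\ge 3$) is not the same order, and more seriously it is not defined on all lattice points of $S_n$. A lattice point of $S_n$ may correspond to a multigraph with a path component (a path whose two endpoints carry a single loop each); such a point is neither a disjoint union of loops and doubled edges nor does it contain a long cycle, so it receives no weight under your scheme. These path-type points are precisely the non-vertex lattice points, and they are exactly the $M$ for which $t_M^2$ can appear in a degree-two binomial. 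Thus your argument for~(3), which asserts that the squared variable is always of ``cycle type'' while the squarefree side is of ``involution type'', does not cover the relevant cases. The paper's argument instead observes that if $2M=X+Y$ with $X\neq Y$, then wherever $M$ has a $2$ both $X$ and $Y$ have a $2$, while at some position $M$ has a $1$ and one of $X,Y$ has a $2$; hence one of $X,Y$ has strictly more $2$'s than $M$, which forces the squarefree monomial to be initial under $\prec_{S_n}$.

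\textbf{Parts (1) and (4).} For~(1) the paper does not use swap moves on $2G_a$ and $2G_b$ separately. Instead it forms the mixed monomial $t_Mt_Nb$ (using integral closure to extend $t_Mt_N$ to a monomial with the same image as $u$ and $v$), and then factors $u-t_Mt_Nb=t_M(u_1-u_2)$ and $v-t_Mt_Nb=t_N(v_1-v_2)$, contradicting reducedness. Your swap-move sketch does not explain why the resulting degree-two relation has initial term dividing a monomial of $u-v$; without that, there is no contradiction with reducedness. For~(4) the paper's method is again constructive and quite different from a ``6-regular swap'' argument: starting from the squarefree side $t_{B_1}\cdots t_{B_k}$ with $t_{B_1}$ the smallest variable, it greedily selects distinct variables $t_{M_1},\dots,t_{M_s}$ from the non-squarefree side whose matrices jointly cover the support of $B_1$, squaring at most one of each to cover positions where $B_1$ has a $2$. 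This produces a cubefree monomial $m_1$ dividing $u$ and a relation $m_1-t_{B_1}m_2\in I_{S_n}$ with initial term $m_1$; minimality in the reduced basis forces $m_1=u$, hence $u$ is cubefree. Your approach for~(4), as you note yourself, leaves the compatibility of the re-decomposition with the chosen weights unresolved, and that obstacle is exactly what the paper's constructive covering argument avoids.
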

	 	
\section{Basic Properties, Symmetry, and Integral Closure} \label{sec:basics}
	
	Although relatively little has been established about the Ehrhart theory of $S_n$, it has still been studied and some basic information is known.
	For $\Sigma_n$, the degrees of the constituent polynomials of its Ehrhart quasipolynomial are known.
	
	\begin{theorem}[Theorem 8.1, \cite{Jia}]\label{jia}
		The Ehrhart quasipolynomial of $\Sigma_n$ is of the form $f_n(t) + (-1)^tg_n(t)$, where $\deg f(t) = {n \choose 2}$ and
		\[
		\deg g_n(t) = 
			\left\{
				\begin{array}{ll}
					{n - 1 \choose 2} - 1 & \mathrm{~if~} n \mathrm{~odd}\\
					{n - 2 \choose 2} - 1 & \mathrm{~if~} n \mathrm{~even}
				\end{array}
			\right. .
		\]
	\end{theorem}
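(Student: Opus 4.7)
The plan is to first establish the quasipolynomial form via a denominator argument, then to pin down the degree of $g_n$ by exploiting a parity constraint on the diagonal. Every vertex of $\Sigma_n$ lies in $L_n$ and so has coordinates in $\frac{1}{2}\ZZ$; hence $\den\Sigma_n$ divides $2$ and $L_{\Sigma_n}(t)$ is a quasipolynomial of period dividing $2$. Writing its two constituent polynomials as $L_n^{(0)}$ (for $t$ even) and $L_n^{(1)}$ (for $t$ odd), and setting $f_n = \tfrac{1}{2}(L_n^{(0)}+L_n^{(1)})$ and $g_n = \tfrac{1}{2}(L_n^{(0)}-L_n^{(1)})$, one obtains $L_{\Sigma_n}(t) = f_n(t) + (-1)^t g_n(t)$. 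Both constituents share the leading term $\mathrm{vol}(\Sigma_n)\cdot t^{{n \choose 2}}/{n \choose 2}!$, so $\deg f_n = {n \choose 2}$ and $\deg g_n < {n \choose 2}$, handling the statement about $f_n$.

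For the exact degree of $g_n$, note combinatorially that a lattice point of $t\Sigma_n$ is a symmetric nonnegative integer matrix $A = (a_{ij})$ with all row sums equal to $t$; summing all entries gives $nt = \mathrm{tr}(A) + 2\sum_{i<j}a_{ij}$, forcing $\mathrm{tr}(A) \equiv nt \pmod{2}$. For $t$ odd this constrains the trace to have the parity of $n$, and this parity obstruction is exactly what produces the oscillating term. I would stratify lattice points of $t\Sigma_n$ by the set of diagonal positions where $a_{ii}$ is odd; shifts of the form $a_{ii} \mapsto a_{ii}+1$ identify each stratum with a lattice-point count on a reduced polytope of symmetric matrices with one or two fewer free rows and columns. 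Inclusion-exclusion over these strata then extracts $2g_n(t) = L_n^{(0)}(t) - L_n^{(1)}(t)$ as a signed sum of Ehrhart-type polynomials. When $n$ is odd, the dominant stratum picks one forced-odd diagonal position, reducing to a polytope of symmetric $(n-1) \times (n-1)$ matrices of Ehrhart degree ${n-1 \choose 2}$, and cancellation of leading terms lowers the degree by one to ${n-1 \choose 2} - 1$. When $n$ is even the trace is still even for $t$ odd, so the first surviving contribution to the difference comes from \emph{pairs} of odd diagonal entries, producing an $(n-2)$-dimensional symmetric submatrix polytope and the degree ${n-2 \choose 2} - 1$.

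The main obstacle will be showing that the claimed degrees are \emph{sharp} rather than mere upper bounds: the stratification produces degree-bound estimates readily, but one must verify that the leading coefficients after reduction do not accidentally vanish. A cleaner alternative is to compute $E_{\Sigma_n}(t)$ as a rational function with denominator $(1 - t^2)^{{n \choose 2} + 1}$, split the numerator into its even and odd parts, and read off $\deg f_n$ and $\deg g_n$ from the degrees of those two pieces directly. As a consistency check, Ehrhart--Macdonald reciprocity in the form $L_{\Sigma_n^\circ}(t) = L_{\Sigma_n}(t - n)$ (valid by the translation $a_{ij} \mapsto a_{ij} - 1$ away from the all-ones matrix) gives functional equations $f_n(-t) = (-1)^{{n \choose 2}} f_n(t - n)$ and $g_n(-t) = (-1)^{{n \choose 2} - n} g_n(t - n)$, whose symmetries further constrain the degrees and corroborate the answer.
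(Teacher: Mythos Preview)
The paper does not give its own proof of this theorem: it is quoted from \cite{Jia}, and the surrounding text explicitly says that Stanley had earlier established the stated degrees as \emph{upper bounds} and conjectured equality, with the equality later ``proven using analytic methods.'' So there is no in-paper argument to compare against; what you should be aiming for is the sharpness result that Jia supplied.

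Your opening paragraph is fine and recovers the easy part: period dividing~$2$, the decomposition $f_n + (-1)^t g_n$, and $\deg f_n = \binom{n}{2}$. The trace-parity observation is also correct and is indeed the mechanism behind the oscillation. But the heart of the theorem is exactly the step you flag as ``the main obstacle'': proving the degrees of $g_n$ are \emph{equal} to $\binom{n-1}{2}-1$ or $\binom{n-2}{2}-1$, not merely bounded above by them. Your stratification sketch does not actually produce the claimed reduction: fixing a diagonal entry $a_{ii}$ to be odd and shifting $a_{ii}\mapsto a_{ii}+1$ changes the $i$th row sum but does not delete row and column~$i$, so it is not clear how you arrive at a symmetric $(n-1)\times(n-1)$ polytope of Ehrhart degree~$\binom{n-1}{2}$. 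Even granting some such reduction, inclusion--exclusion over strata will give you a signed sum whose leading terms you must show do \emph{not} cancel, and you offer no mechanism for that. The reciprocity functional equations you write down are correct and do constrain parities of degrees, but they cannot by themselves pin down the exact degree.

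In short, what you have outlined reproduces Stanley's upper bound, not Jia's theorem; the nonvanishing of the relevant leading coefficient is the entire content of the cited result, and the paper tells you it was settled analytically rather than by the kind of combinatorial reduction you propose.
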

	
	Stanley first proved that the above degrees are upper bounds and conjectured equality \cite{stanley1976}, and the conjecture was proven using analytic methods.
	These degrees provide an upper bound on the degree of $h^*_{\Sigma_n}(t)$; we will provide exact degrees later. 
	Since the Ehrhart series of $S_n$, as a formal power series, consists of the even-degree terms of the monomials appearing in $E_{\Sigma_n}(t)$, 
	we get $\mathcal{L}_{S_n}(t) = f_n(2t) + g_n(2t)$.

	The defining inequalities of our polytopes will be helpful in some contexts. For $S_n$, these are
	\begin{eqnarray*}
		x_{ij} &\geq& 0 \mathrm{~for~all~} 1 \leq i \leq j \leq n,\\
		x_{ij} &=& x_{ji} \mathrm{~for~all~} 1 \leq i < j \leq n,\\
		\sum_{i=1}^n x_{ij} &=& 2 \mathrm{~for~each~} j = 1, \ldots, n.
	\end{eqnarray*}
	The first set of inequalities provided indicate that the facet-defining supporting hyperplanes of $S_n$ are $x_{ij} = 0$: if any of these are disregarded, the solution set strictly increases in size. 
	
	\begin{definition}
		A lattice polytope $\polyp \subseteq \RR^n$ is called {\em integrally closed} if, for every $v \in m\polyp \cap \ZZ^n$, there are $m$ points $v_1, \ldots, v_m \in \polyp \cap \ZZ^n$
		such that $v = v_1 + \cdots + v_m$. 
	\end{definition}
	
	This idea is not to be confused with a normal polytope, in which we instead choose $v$ from $m\polyp \cap (my + N)$ for an appropriate choice of $y \in \polyp\cap\ZZ^n$ and
	$N$ is the lattice
	\[
		N = \sum_{z_1,z_2 \in \polyp \cap \ZZ^n} \ZZ(z_1 - z_2) \subseteq \ZZ^n.
	\]
	In particular, every integrally closed polytope is normal, but not every normal polytope is integrally closed.
	There is more discussion of this difference in \cite{gubeladzeconvexnormality}. 
	It is currently an open problem to determine whether integrally closed polytopes have unimodal $h^*$-vectors. This is unknown even in highly restricted cases, such as if the 
	polytope is reflexive, a simplex, or even both. The last case is explored more in \cite{BraunDavis}.
	
	We first would like to prove that $S_n$ is integrally closed.
	To do so, we must interpret the lattice points of $S_n$ as certain adjacency matrices of graphs.
		
	\begin{proposition}\label{intclosed}
		For all $n$, $S_n$ is integrally closed.
	\end{proposition}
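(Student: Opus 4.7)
The plan is to realize lattice points of $mS_n$ as ``semi-edge multigraphs'' and then apply Petersen's $2$-factor decomposition theorem. To each $A \in mS_n \cap \ZZ^{n \times n}$ associate the multigraph $G_A$ on vertex set $\{1,\dots,n\}$ in which distinct vertices $i,j$ are joined by $A_{ij}$ edges and each vertex $i$ carries $A_{ii}$ \emph{semi-edges}---single-endpoint edges, each contributing $1$ to the degree of $i$. The row-sum condition becomes the statement that every vertex of $G_A$ has degree $2m$, and under this dictionary the lattice points of $S_n$ correspond precisely to such semi-edge multigraphs in which every vertex has degree exactly $2$. Thus it suffices to edge-partition $G_A$ into $m$ spanning subgraphs in each of which every vertex has degree $2$.

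The first step is to eliminate the semi-edges without altering any vertex degree. Observe that $\sum_i A_{ii}$ is even, because $\sum_{i,j} A_{ij} = 2mn$ and $\sum_{i \neq j} A_{ij} = 2\sum_{i<j} A_{ij}$; it follows that the number of vertices $i$ with odd $A_{ii}$ is also even. At each vertex I pair semi-edges two-at-a-time to form loops, and then match the (even number of) leftover semi-edges across vertices, replacing each matched pair by an ordinary edge. These substitutions preserve every vertex degree, so the resulting multigraph $G_A'$ is $2m$-regular (though possibly with loops). Petersen's theorem---that every $2k$-regular multigraph decomposes into $k$ edge-disjoint $2$-factors, a result which goes through in the presence of loops via the usual Eulerian-circuit plus K\"onig's theorem proof---then produces $2$-factors $F_1,\dots,F_m$ of $G_A'$.

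Reversing the pairings translates each $F_\ell$ into a sub-multigraph $F_\ell^*$ of $G_A$ whose every vertex again has degree $2$, and hence into a lattice point $A_\ell \in S_n \cap \ZZ^{n \times n}$; summing over $\ell$ recovers $A = A_1 + \cdots + A_m$, establishing integral closure. The main technical point is the translation dictionary: one must check that the semi-edge/loop/edge substitutions faithfully track the matrix entries, that the parity observations always permit the reduction to a semi-edge-free graph, and that reversing the substitutions produces honest symmetric integer matrices of row sum $2$. Petersen's theorem supplies the combinatorial engine, so once the reduction is in place the remainder of the argument is essentially bookkeeping.
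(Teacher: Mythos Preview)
Your proof is correct and follows essentially the same strategy as the paper: interpret a point of $mS_n$ as a $2m$-regular multigraph (with the diagonal entries playing the role of degree-$1$ ``half-loops''), repair the diagonal so that Petersen's $2$-factor theorem applies, and then undo the repair on each $2$-factor. The only difference is cosmetic---you pair the leftover semi-edges among the existing vertices to form loops and matching edges, whereas the paper routes them to newly created auxiliary vertices $w_0,\dots,w_t$; both modifications preserve all degrees and invert cleanly after the $2$-factorization.
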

	
	\begin{proof}
		The can be seen as a corollary of a theorem of Petersen's 2-factor theorem. 
		For any $m \in \ZZ_{\geq 0}$, each lattice point $X = (x_{ij}) \in mS_n$ can be interpreted as the adjacency matrix of an undirected $2m$-regular multigraph $G_X$ on distinct vertices $v_1,\ldots,v_n$, 
		with loops having degree 1. 
		We first observe that the total number of loops will be even: if there were an odd number of loops, consider the graph with the loops removed.
		The sum of degrees of the vertices in the resulting graph would be odd, which is an impossibility.
		
		Denote by $V_{odd}(G_X)$ the vertices of $G_X$ with an odd number of loops, and write $|V_{odd}(G_X)| = 2mt + s$, where $t,s$ are nonnegative integers and $s < 2m$. 
		Note in particular that $s$ will be even.
		Construct a new graph $G_Y$ with vertex set $V(G_Y) = \{v_1, \ldots,v_n,w_0,w_1,\ldots,w_t\}$ with the same edges as in $G_X$ with the following modifications:
		\begin{enumerate}
			\item For each $v_i \notin V_{odd}(G_X)$, $v_i$ will have $\frac{1}{2}x_{ii}$ loops in $G_Y$.
			\item For each $v_i \in V_{odd}(G_X)$, $v_i$ will have $\frac{1}{2}(x_{ii} - 1)$ loops and an edge between $v_i$ and the lowest-indexed $w_j$ such that $\deg w_j < 2m$.
			\item Vertex $w_t$ will have $\frac{1}{2}(2m - s)$ loops.
		\end{enumerate}
		This new graph will be $2m$-regular, now counting loops as degree 2. 
		Thus, by Petersen's 2-factorization theorem, $G_Y$ can be decomposed into 2-factors.
		Hence the matrix $Y$ corresponding to $G_Y$ will decompose as the sum of $Y_1,\ldots, Y_m$, each summand a lattice point of $mS_{n+t+1}$.
		
		Now we must ``undo'' the changes we made to $G_X$ to obtain the desired sum. 
		Index the rows and columns by $\{v_1, \ldots,v_n,w_0,w_1,\ldots,w_t\}$. 
		Each edge $v_iw_j$ will appear in some $Y_k$ as a 1 in positions $(v_i,w_j)$ and $(w_j,v_i)$. Replace these entries with 0 and add 1 to entry $(v_i,v_i)$. 
		Denote by $X_k$ the submatrix of $Y_k$ consisting of rows and columns indexed by $v_1, \ldots, v_n$ after any appropriate replacements have been made.
		Each replacement preserves the sum of row/column $v_i$, and applying this to each $Y_k$ leaves any entry $(v_i,w_j)$ as 0, so each $X_k$ is a lattice point of $S_n$.
		Thus $X = \sum X_k$, as desired. 
	\end{proof}
	
	A second useful ingredient in proving that $h^*(B_n)$ is unimodal is proving that it has the following property.

	\begin{definition}
		For a lattice polytope $\polyp \subseteq \RR^n$, denote by $k[\polyp]$ the semigroup algebra
		\[
		k[\polyp] := k[x^az^m | a \in m\polyp \cap \ZZ^{n+1}] \subseteq k[x_1^{\pm 1}, \ldots, x_n^{\pm 1}, z].
		\]
		Then $\polyp$ is called {\em Gorenstein} if $k[\polyp]$ is Gorenstein. 
		More specifically, $\polyp$ is {\em Gorenstein of index $r$} if there exists a monomial $x^cz^r$ for which 
		\[
		k[\polyp^{\circ}] \cong (x^cz^r)k[\polyp].
		\]
	\end{definition}
	
	Having the hyperplane description of a polytope can make it easier to determine if it is Gorenstein, as evidenced by the following lemma.
	
	\begin{lemma}[Lemma 2(iii), \cite{BrunsRomer}]\label{facetdistance}
		Suppose $\polyp$ has irredundant supporting hyperplanes $l_1,\ldots, l_s \geq 0$, where the coefficients of each $l_i$ are relatively prime integers. 
		Then $\polyp$ is Gorenstein (of index $r$) if and only if there is some $c \in r\polyp \cap \ZZ^n$ for which $l_i(c) = 1$ for all $i$.
	\end{lemma}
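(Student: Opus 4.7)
The plan is to work directly from the paper's Gorenstein definition. By the Danilov--Stanley description, $k[\polyp^\circ]$ is the $k$-span of the monomials $x^a z^m$ with $a \in \mathrm{int}(m\polyp) \cap \ZZ^n$, so $\polyp$ is Gorenstein of index $r$ precisely when this module is principal over $k[\polyp]$ with a degree-$r$ generator $x^c z^r$. I would fix the convention that each $l_i$ has primitive integer linear part, and write $l_i^{(m)}$ for the affine functional with the same linear part but constant term scaled by $m$, so that $m\polyp = \{x : l_i^{(m)}(x) \geq 0\}$ and, for a lattice point $y$, $l_i^{(m)}(y) \in \ZZ_{\geq 0}$ with $y$ interior iff $l_i^{(m)}(y) \geq 1$ for every $i$. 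The condition ``$l_i(c) = 1$'' in the lemma then reads $l_i^{(r)}(c) = 1$.

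For $(\Leftarrow)$, assume such a $c$ exists. For any interior lattice point $y$ of $m\polyp$, additivity under the constant scaling gives $l_i^{(m-r)}(y - c) = l_i^{(m)}(y) - l_i^{(r)}(c) \geq 1 - 1 = 0$, so $y - c \in (m-r)\polyp \cap \ZZ^n$ and $x^y z^m = x^c z^r \cdot x^{y-c} z^{m-r}$ lies in $(x^c z^r)k[\polyp]$. The reverse containment is immediate since $x^c z^r$ itself is interior, yielding the desired isomorphism.

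For $(\Rightarrow)$, suppose $k[\polyp^\circ] = (x^c z^r)k[\polyp]$. The inclusion $x^c z^r \in k[\polyp^\circ]$ forces $c \in \mathrm{int}(r\polyp)$, so $l_i^{(r)}(c) \geq 1$ for every $i$, and the task is to exclude $l_i^{(r)}(c) \geq 2$. My plan is to exhibit, for each facet index $i$, an interior lattice point $y \in \mathrm{int}(m\polyp) \cap \ZZ^n$ (for some $m$) with $l_i^{(m)}(y) = 1$; the principal-generation hypothesis would then write $y = c + a$ with $a \in (m-r)\polyp \cap \ZZ^n$, whence $l_i^{(r)}(c) = l_i^{(m)}(y) - l_i^{(m-r)}(a) \leq 1$, forcing equality.

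The hard part will be this existence statement, which requires a lattice point simultaneously on the ``closest slice'' parallel to facet $i$ and interior to $m\polyp$ with respect to every other facet. My approach is to combine two ingredients. First, primitivity of the linear part of $l_i$ guarantees that $l_i^{(m)}$ surjects onto $\ZZ$ on the ambient lattice, so the affine hyperplane $H_{i,m} = \{y : l_i^{(m)}(y) = 1\}$ meets $\ZZ^n$ in a rank-$(n-1)$ sublattice. Second, for $m$ sufficiently large the rational $(d-1)$-polytope $H_{i,m} \cap \{y \in m\polyp : l_j^{(m)}(y) \geq 1 \text{ for all } j \neq i\}$ is full-dimensional inside $H_{i,m}$ and its Ehrhart polynomial in $m$ has positive leading coefficient of degree $d-1$, so for some $m$ it contains a lattice point of the hyperplane sublattice; that point serves as the desired $y$, closing the argument.
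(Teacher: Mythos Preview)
The paper does not prove this lemma: it is quoted from Bruns--R\"omer (as the citation in the label indicates) and used as a black box, so there is no ``paper's own proof'' to compare against.

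That said, your outline is the standard argument via the Danilov--Stanley description of the canonical module, and it is essentially correct. Two minor points are worth tightening. In the $(\Leftarrow)$ direction you implicitly assume no interior lattice point occurs at height $m<r$; this does follow from your additivity identity once you note that for a bounded $\polyp$ the homogeneous forms $L_i(x,z)=\langle u_i,x\rangle+b_iz$ already cut out the cone over $\polyp$, so $L_i(y-c,m-r)\ge 0$ for all $i$ forces $m-r\ge 0$. In the $(\Rightarrow)$ direction, calling the lattice-point count of your slice an ``Ehrhart polynomial in $m$'' is not quite right since the region itself varies with $m$; it is cleaner to say that the number of lattice points in $H_{i,m}\cap\{l_j^{(m)}\ge 1\ \forall j\neq i\}$ grows like a positive constant times $m^{d-1}$ as $m\to\infty$, which suffices. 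Neither issue is a genuine gap.
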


	Generally, proving the unimodality of an $h^*$-vector is a challenging task. There are more techniques available, though, if we have a Gorenstein polytope, that is,
	if the semigroup algebra $k[\polyp]$ is Gorenstein. A closely related class of polytopes is the following.
	
	\begin{definition}
		A lattice polytope $\polyp$ is called {\em reflexive} if $0 \in \polyp^{\circ}$, that is, $0$ is in the interior of $\polyp$, and its {\em (polar) dual}
		\[
			\polyp^{\Delta} := \{y \in \RR^n : x \cdot y \leq 1 \mathrm{~for~all~} x \in \polyp\}
		\]
		is also a lattice polytope.
		A lattice translate of a reflexive polytope is also called reflexive.
	\end{definition}
	
	It was proven by Hibi \cite{HibiDualPolytopes} that reflexive polytope are exactly the Gorenstein polytopes of index 1.
	This connection has been used to reduce questions about integrally closed Gorenstein polytopes to questions about only the integrally closed reflexive polytopes, as in the following statement.
	
	\begin{lemma}[Corollary 7, \cite{BrunsRomer}]\label{reflexive}
		Suppose $\polyp \subseteq \RR^n$ is a full-dimensional integrally closed Gorenstein polytope with supporting hyperplanes $l_1,\ldots,l_s$ as in Lemma~\ref{facetdistance}.
		Consider lattice points $v_0,\ldots,v_k$ of $\polyp$.
		If these points form a $k$-dimensional simplex and $l_i(v_0 + \cdots + v_k) = 1$ for each $i$, then $\polyp$ projects to an integrally closed reflexive polytope $\mathcal{Q}$ of 
		dimension $n - k$ with equal $h^*$-vector. 
	\end{lemma}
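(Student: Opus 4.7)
The plan is to leverage the combinatorial structure imposed on $\polyp$ by the hypotheses: since each $l_i$ is a nonnegative integer-valued functional on $\polyp\cap\ZZ^n$ and $\sum_{j=0}^k l_i(v_j)=1$, for every facet index $i$ there is a unique $j(i)\in\{0,\ldots,k\}$ with $l_i(v_{j(i)})=1$ and $l_i(v_j)=0$ otherwise. Thus the facet set partitions as $F_0\sqcup\cdots\sqcup F_k$, where $v_j$ lies on every facet outside $F_j$ and at lattice distance exactly $1$ from each facet in $F_j$. In particular, $c:=v_0+\cdots+v_k$ is a Gorenstein point of index $r=k+1$ in the sense of Lemma~\ref{facetdistance}.

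First I would set up the projection. Let $L=\mathrm{span}\{v_1-v_0,\ldots,v_k-v_0\}$, a $k$-dimensional subspace of $\RR^n$, and let $\pi\colon\RR^n\to\RR^n/L$. Define $\mathcal{Q}:=\pi(\polyp-v_0)$, a polytope of dimension $n-k$ in the $(n-k)$-dimensional quotient. A key sublemma is that $v_0,\ldots,v_k$ form a \emph{unimodular} simplex; this should follow from the partition above, since any sublattice of index greater than $1$ in $\ZZ^n\cap L$ would yield a lattice point whose $l_i$-values contradict the rigid $\{0,1\}$ dichotomy across the classes $F_j$. Consequently, $\pi$ induces a surjection of lattices $\ZZ^n\to\ZZ^n/(\ZZ^n\cap L)$ of the correct rank.

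Next I would verify that $\mathcal{Q}$ is integrally closed and reflexive. Integral closure is inherited from $\polyp$ via a standard lifting argument: a lattice point $w\in m\mathcal{Q}\cap\ZZ^{n-k}$ lifts along $\pi$ to a lattice point of $m\polyp$, which, by the integral closure of $\polyp$, decomposes into $m$ lattice points of $\polyp$ whose images under $\pi$ give the desired decomposition of $w$. For reflexivity, each $l_i$ descends to the quotient precisely when $l_i$ is constant on cosets of $L$; grouping the facet functionals within each class $F_j$ produces the facet functionals of $\mathcal{Q}$, and the image $\pi(c)$ is an interior lattice point at distance $1$ from every facet of $\mathcal{Q}$. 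Lemma~\ref{facetdistance} then identifies $\mathcal{Q}$ as Gorenstein of index $1$, i.e.\ reflexive after an appropriate translation.

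The main obstacle, and the heart of the argument, is the equality of $h^*$-vectors. The cleanest route is algebraic: consider the elements $\theta_i:=x^{v_i}z-x^{v_0}z\in k[\polyp]$ for $i=1,\ldots,k$. Because $\conv{v_0,\ldots,v_k}$ is a unimodular simplex meeting the facet classes transversely, $\theta_1,\ldots,\theta_k$ form a regular sequence in $k[\polyp]$, and the quotient $k[\polyp]/(\theta_1,\ldots,\theta_k)$ is canonically isomorphic to $k[\mathcal{Q}]$ as a graded ring. Since the $h^*$-polynomial of a lattice polytope can be read off the Hilbert series of the artinian reduction of its semigroup algebra, and that artinian reduction coincides for $k[\polyp]$ and $k[\mathcal{Q}]$, the two $h^*$-polynomials agree. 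Equivalently, a hands-on Ehrhart argument would partition $m\polyp\cap\ZZ^n$ into fibers of $\pi$: each nonempty fiber is a translate of $m\sigma\cap\ZZ^n$ for $\sigma:=\conv{v_0,\ldots,v_k}$ unimodular, contributing a factor of $1/(1-t)^k$ in the Ehrhart series that is exactly absorbed by the drop in dimension, leaving the $h^*$-numerator unchanged.
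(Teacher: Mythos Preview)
The paper does not prove this lemma at all: it is quoted verbatim as Corollary~7 of Bruns--R\"omer and used as a black box in the proof of Theorem~\ref{neven}. So there is no ``paper's own proof'' to compare against; any argument you supply is necessarily going beyond what the paper contains.

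That said, your outline is essentially the Bruns--R\"omer strategy (special simplex, projection along its affine span, regular sequence of degree-one binomials), and the structural observation that the facets partition as $F_0\sqcup\cdots\sqcup F_k$ according to which $v_j$ lies off them is exactly the right starting point. A couple of steps in your sketch are not yet justified, though. The unimodularity of $\conv{v_0,\ldots,v_k}$ does not follow merely from the $\{0,1\}$ pattern of the $l_i(v_j)$; one needs the Cohen--Macaulayness of $k[\polyp]$ (guaranteed here by integral closure) to conclude that the $\theta_i$ form a regular sequence, and it is this algebraic fact that drives the $h^*$-equality rather than a direct lattice argument. Likewise, the claim that every lattice point of $m\mathcal{Q}$ lifts to a lattice point of $m\polyp$ is precisely the content of the isomorphism $k[\polyp]/(\theta_1,\ldots,\theta_k)\cong k[\mathcal{Q}]$ and requires the special-simplex combinatorics to pin down the fiber structure; it is not an automatic consequence of surjectivity of $\pi$. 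These are fillable gaps, but as written they are assertions rather than arguments.
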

	
	\begin{theorem}\label{neven}
		$S_n$ is Gorenstein if and only if $n$ is even. When $n=2k$, $S_n$ is Gorenstein of type $k$, and $h^*(S_n)$ 
		is the $h^*$-vector of a reflexive polytope of dimension $2k^2-2k+1$. Hence, $\deg h^*_{S_n}(t) = 2k^2 - 2k +1$. 
	\end{theorem}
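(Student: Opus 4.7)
The plan is to apply Lemma~\ref{facetdistance} and then Lemma~\ref{reflexive}, using the integral closure of $S_n$ established in Proposition~\ref{intclosed}.

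First, I would use Lemma~\ref{facetdistance} to characterize when $S_n$ is Gorenstein. The facet-defining inequalities are $x_{ij} \geq 0$ for $1 \leq i \leq j \leq n$, each with coprime integer coefficients. By the lemma, $S_n$ is Gorenstein of index $r$ iff there is a lattice point $c \in rS_n$ with $c_{ij} = 1$ for all such facets. By the symmetry of $c$, this forces $c$ to be the all-ones matrix $J$, and membership in $rS_n$ requires each row sum of $J$ to equal $2r$, i.e.\ $n = 2r$. Hence $S_n$ is Gorenstein exactly when $n$ is even, and for $n = 2k$ the index is $k$.

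Next, to produce the reflexive polytope of dimension $2k^2 - 2k + 1$, I would apply Lemma~\ref{reflexive} with a $(k-1)$-dimensional simplex whose $k$ vertices in $S_{2k}$ sum to $J$; indeed $\dim S_{2k} - (k-1) = \binom{2k}{2} - (k-1) = 2k^2 - 2k + 1$. I would construct the vertices from a $1$-factorization $M_1,\ldots,M_{2k-1}$ of $K_{2k}$: take $V_i$ to be the adjacency matrix of $M_{2i-1} \cup M_{2i}$ for $i = 1,\ldots,k-1$, and let $V_k$ be the adjacency matrix of $M_{2k-1}$ together with a loop at every vertex. Each $V_i$ is symmetric with nonnegative integer entries and row sums equal to $2$, hence a lattice point of $S_{2k}$, and
\[
V_1 + \cdots + V_k = (M_1 + \cdots + M_{2k-1}) + I = (J - I) + I = J,
\]
so the facet-distance hypothesis of Lemma~\ref{reflexive} holds.

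The key verification is affine independence of the $V_i$. Writing $\sum_{i=1}^{k-1} a_i(V_i - V_k) = 0$, inspecting the diagonal (to which only $V_k$ contributes) gives $\sum a_i = 0$, after which the off-diagonal equation reduces to $\sum_{i=1}^{k-1} a_i(M_{2i-1} + M_{2i}) = 0$. Because the matchings in a $1$-factorization are pairwise edge-disjoint, the matrices $M_1,\ldots,M_{2k-2}$ are linearly independent, so each $a_i$ vanishes. Applying Lemma~\ref{reflexive} yields the reflexive polytope $\mathcal{Q}$ of dimension $2k^2 - 2k + 1$ with $h^*(S_n) = h^*(\mathcal{Q})$, and since reflexive polytopes have symmetric $h^*$-vector of top degree equal to their dimension, we conclude $\deg h^*_{S_n}(t) = 2k^2 - 2k + 1$. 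The main obstacle is exhibiting an affinely independent decomposition $J = V_1 + \cdots + V_k$ inside $S_{2k}$; the $1$-factorization construction makes this transparent, since edge-disjointness of the perfect matchings immediately delivers the linear independence needed for the simplex condition.
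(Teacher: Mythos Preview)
Your argument is correct and follows the same overall strategy as the paper: apply Lemma~\ref{facetdistance} to see that the only candidate interior point is the all-ones matrix $J$, deduce the parity condition and the index, then exhibit a $(k-1)$-simplex of lattice points of $S_{2k}$ summing to $J$ and invoke Lemma~\ref{reflexive}.

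The only genuine difference lies in the choice of simplex. The paper uses an explicit circulant construction: for each $i=1,\ldots,k-1$ the matrix with $1$'s exactly in the ``diagonals'' at distance $i$ and $n-i$ from the main diagonal, together with one matrix supported on the main diagonal and the diagonal at distance $k$. These matrices visibly have pairwise disjoint support, so affine independence is immediate without any further computation. Your construction instead packages a $1$-factorization $M_1,\ldots,M_{2k-1}$ of $K_{2k}$ into pairs (plus one matching with the identity), and you recover affine independence from the edge-disjointness of the matchings via a short linear-algebra check. Both produce $k$ lattice points with pairwise disjoint support summing to $J$; the paper's version is slightly more self-contained (no appeal to the existence of $1$-factorizations), while yours makes the graph-theoretic content explicit and would adapt more readily if one wanted to count or vary such simplices. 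The paper in fact remarks that the circulant simplex is not unique, and your construction is a natural alternative witness.
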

	
	\begin{proof}
		By Lemma~\ref{facetdistance} and knowing the facet description of $S_n$, we can see that the polytope is Gorenstein by choosing integer matrices of $S_n$ whose sum is the all-ones
		matrix. When $n$ is odd, this is impossible: such a matrix has an odd line sum, whereas any sum of matrices in $S_n$ has even line sum.

		Let $n=2k$. For each $i \in \{1, 2, \ldots, k-1\}$, construct a matrix 
		\[
			\begin{pmatrix}
				a_0 & a_{n-1} & a_{n-2} & \cdots & a_2 & a_1 \\
				a_{n-1} & a_0 & a_{n-1} & \cdots & a_3 & a_2 \\
				a_{n-2} & a_{n-1} & a_0 & \cdots & a_4 & a_3 \\
				\vdots & \vdots & \vdots & \ddots & \vdots & \vdots \\
				a_2 & a_3 & a_4 & \cdots & a_0 & a_{n-1} \\
				a_1 & a_2 & a_3 & \cdots & a_{n-1} & a_0 \\
			\end{pmatrix}
		\]
		by setting $a_i = a_{n-i} = 1$ and $a_j = 0$ for all $j \neq i$.
		Construct one additional matrix by setting $a_0 = a_k = 1$ and $a_j = 0$ for all $j \neq 0, k$.
		Each of the $k$ matrices are symmetric and have pairwise disjoint support by construction.  
		These are therefore vertices of a simplex of dimension $k-1$, and Lemma~\ref{reflexive} provides the reflexivity result.
	\end{proof}
	
	Note that this is not the only class of simplices satisfying the conditions of Lemma~\ref{facetdistance} contained in $S_n$ for even $n$; others may be found. It may be interesting to ask how many 
	such distinct simplices in $S_n$ exist.
	
	\begin{example}
	For $n=6$, we construct the special simplex described above. It has three vertices, which are
	\[
		\begin{pmatrix}
			0 & 1 & 0 & 0 & 0 & 1\\
			1 & 0 & 1 & 0 & 0 & 0\\
			0 & 1 & 0 & 1 & 0 & 0\\
			0 & 0 & 1 & 0 & 1 & 0\\
			0 & 0 & 0 & 1 & 0 & 1\\
			1 & 0 & 0 & 0 & 1 & 0\\
		\end{pmatrix},
		\begin{pmatrix}
			0 & 0 & 1 & 0 & 1 & 0\\
			0 & 0 & 0 & 1 & 0 & 1\\
			1 & 0 & 0 & 0 & 1 & 0\\
			0 & 1 & 0 & 0 & 0 & 1\\
			1 & 0 & 1 & 0 & 0 & 0\\
			0 & 1 & 0 & 1 & 0 & 0\\
		\end{pmatrix},
		\begin{pmatrix}
			1 & 0 & 0 & 1 & 0 & 0\\
			0 & 1 & 0 & 0 & 1 & 0\\
			0 & 0 & 1 & 0 & 0 & 1\\
			1 & 0 & 0 & 1 & 0 & 0\\
			0 & 1 & 0 & 0 & 1 & 0\\
			0 & 0 & 1 & 0 & 0 & 1\\
		\end{pmatrix}.
	\]
	\end{example}
	
	\begin{proposition}\label{nodd}
		If $n=2k+1$, then the first scaling of $S_n$ containing interior lattice points is $\left(\frac{n+1}{2}\right)S_n$. Specifically, the number of interior lattice points in this scaling is the number of
		symmetric permutation matrices, i.e. the number of involutions of the set $\{1, 2, \ldots, n\}$. Thus, $\deg h^*_{S_n}(t) = 2k^2$.
	\end{proposition}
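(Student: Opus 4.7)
The plan is to characterize the interior lattice points of $mS_n$ directly from the facet description of $S_n$ and then apply the identity
\[
\deg h^*_{\polyp}(t) = \dim \polyp + 1 - \mathrm{codeg}(\polyp),
\]
valid for any lattice polytope $\polyp$, where $\mathrm{codeg}(\polyp)$ denotes the smallest $m \geq 1$ for which $(m\polyp)^\circ \cap \ZZ^n \neq \emptyset$. This identity is a standard consequence of Ehrhart--Macdonald reciprocity.

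Concretely, since the facets of $S_n$ are cut out by the hyperplanes $x_{ij} = 0$, as noted after Proposition~\ref{intclosed}, a lattice matrix $X = (x_{ij})$ lies in $(mS_n)^\circ$ if and only if $X$ is a symmetric integer matrix with $x_{ij} \geq 1$ for all $i, j$ and each row sum equal to $2m$. Writing $X = J + Y$ with $J$ the $n \times n$ all-ones matrix, this is equivalent to $Y$ being a symmetric nonnegative integer matrix whose row sums all equal $2m - n$.

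Since $n = 2k+1$ is odd, $2m - n$ is always odd, so negative values are excluded and $2m - n = 0$ cannot occur. The smallest admissible case is therefore $2m - n = 1$, that is, $m = (n+1)/2 = k+1$. For this dilation, $Y$ is forced to be a symmetric nonnegative integer matrix with every row sum equal to $1$, hence a symmetric permutation matrix; and symmetric permutation matrices are in bijection with the involutions of $\{1, \ldots, n\}$, which establishes the claimed count.

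For the degree claim, using $\dim S_n = \binom{n}{2} = 2k^2 + k$, as recalled in the introduction, together with $\mathrm{codeg}(S_n) = k + 1$, the codegree identity gives $\deg h^*_{S_n}(t) = (2k^2 + k) + 1 - (k+1) = 2k^2$. I do not anticipate any real obstacle in this argument: its entire content is the observation that translating by $J$ reduces the interior-point condition to enumerating symmetric nonnegative integer matrices of prescribed row sum, after which the parity of $n$ pins down both the minimal dilation and the count.
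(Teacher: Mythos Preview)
Your proof is correct and follows essentially the same approach as the paper: both arguments translate by the all-ones matrix $J$ to reduce interior lattice points of $mS_n$ to symmetric nonnegative integer matrices with row sum $2m-n$, use the parity of $n$ to identify the minimal dilation, and then read off the degree of $h^*_{S_n}(t)$ from Ehrhart--Macdonald reciprocity. The only cosmetic difference is that you invoke the packaged codegree identity $\deg h^*_{\polyp}(t) = \dim\polyp + 1 - \mathrm{codeg}(\polyp)$ directly, whereas the paper unwinds the reciprocity formula $E_{(S_n)^\circ}(t) = (-1)^{\binom{n}{2}}E_{S_n}(1/t)$ explicitly to reach the same conclusion; one small citation slip is that the facet description you cite actually appears just \emph{before} Proposition~\ref{intclosed}, not after it.
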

	
	\begin{proof}
		For an interior point, each matrix entry must be positive. However, the matrix of all 1s does not work since this results in an odd line sum. Thus there must be
		a 2 in each row and column as well. Thus by subtracting the all-1s matrix, each lattice point corresponds to a symmetric permutation matrix, that is, an involution. 
		The line sum for the interior lattice points will be $n+1$, and we remember that the line sums of matrices in $S_n$ is 2. 
		
		By Theorem 1.5 of \cite{StanleyDecompositions}, 
		$$E_{(S_n)^{\circ}}(t) = (-1)^{{n \choose 2}}E_{S_n}\left(\frac{1}{t}\right).$$
		When expanded as a power series, the lowest-degree term will be $t^{({n \choose 2} + 1) - d}$, where $d = \deg h^*_{S_n}(t)$.
		The degree of $h^*_{S_n}(t)$ follows. 
	\end{proof}
	
	With these, we can deduce the degrees of $h^*_{\Sigma_n}(t)$ for each $n$.
	
	\begin{proposition}
		For all $n$, $h^*(S_n)$ consists of the even-indexed entries of $h^*(\Sigma_n)$.
		Thus, if $n$ is even, then $\deg h^*_{\Sigma_n}(t) = 2(\deg h^*_{S_n}(t))$, and if $n$ is odd, then $\deg h^*_{\Sigma_n}(t) = 2(\deg h^*_{S_n}(t)) + 1$.
	\end{proposition}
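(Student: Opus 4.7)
The plan is to deduce the statement from the identity $\mathcal{L}_{S_n}(m) = \mathcal{L}_{\Sigma_n}(2m)$ already noted in the discussion following Theorem~\ref{jia}. Translating into generating functions, $\mathcal{L}_{S_n}(m)$ picks off $\mathcal{L}_{\Sigma_n}$ at even arguments, so one obtains the ``even bisection'' relation
\[
E_{S_n}(t^2) \;=\; \tfrac{1}{2}\bigl(E_{\Sigma_n}(t) + E_{\Sigma_n}(-t)\bigr).
\]
Substituting the rational form $E_{\Sigma_n}(t) = h^*_{\Sigma_n}(t)/(1-t^2)^{d+1}$ with $d = {n \choose 2}$, and noting that the denominator is already an even function of $t$, the bisection collapses to keeping only the even-degree terms of $h^*_{\Sigma_n}(t)$ in the numerator. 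Setting $s = t^2$ and using that $S_n$ is a lattice polytope of dimension $d$ (so its Ehrhart series is already in reduced form with denominator $(1-s)^{d+1}$), comparison of numerators yields $h^*_{S_n}(s) = \sum_j h^*_{2j}(\Sigma_n)\,s^j$, which is exactly the first assertion.

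For the degree statement I would compute $\deg h^*_{\Sigma_n}(t)$ independently, mirroring the reciprocity argument Stanley used in the proof of Proposition~\ref{nodd}. Applying Ehrhart--Macdonald reciprocity to the rational polytope $\Sigma_n$ (with denominator $2$) gives the standard relation
\[
\deg h^*_{\Sigma_n}(t) \;=\; 2(d+1) - c,
\]
where $c$ is the smallest positive integer such that $c\Sigma_n$ contains an interior lattice point. Since an interior lattice point of $c\Sigma_n$ is a symmetric nonnegative integer matrix with strictly positive entries and line sum $c$, every entry must be at least $1$, forcing $c \geq n$; the all-ones matrix realizes $c = n$. Hence $\deg h^*_{\Sigma_n}(t) = 2{n \choose 2} + 2 - n = n^2 - 2n + 2$. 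A direct arithmetic check then confirms this equals $2(2k^2 - 2k + 1) = 2\deg h^*_{S_n}(t)$ when $n = 2k$ (by Theorem~\ref{neven}) and $2(2k^2) + 1 = 2\deg h^*_{S_n}(t) + 1$ when $n = 2k + 1$ (by Proposition~\ref{nodd}).

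No step here is particularly delicate; the main thing to be careful about is applying Ehrhart--Macdonald reciprocity in the rational (non-lattice) setting, tracking the denominator $D = 2$ correctly through the $t \mapsto 1/t$ substitution so as to locate the lowest power in the expansion of $E_{\Sigma_n^{\circ}}(t)$. Once the formula for $\deg h^*_{\Sigma_n}$ is in hand, the remainder of the proof is bookkeeping with generating functions and a short arithmetic comparison.
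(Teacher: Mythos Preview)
Your argument is correct. The first half---extracting the even bisection of $E_{\Sigma_n}(t)$ and exploiting the fact that the denominator $(1-t^2)^{d+1}$ is already even in $t$---is exactly the mechanism the paper uses, phrased slightly more cleanly via the identity $E_{S_n}(t^2) = \tfrac{1}{2}\bigl(E_{\Sigma_n}(t)+E_{\Sigma_n}(-t)\bigr)$.

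Where you diverge is in establishing the degree formula. The paper does not compute $\deg h^*_{\Sigma_n}$ directly; instead it appeals to the symmetry of $h^*(\Sigma_n)$ (due to Stanley) together with the fact, proved earlier via the Gorenstein characterization and Proposition~\ref{nodd}, that $h^*(S_n)$ is symmetric precisely when $n$ is even. From this it reads off the parity of $\deg h^*_{\Sigma_n}$ and hence the claimed relation. Your route is to compute $\deg h^*_{\Sigma_n}$ outright by Ehrhart--Macdonald reciprocity for the rational polytope $\Sigma_n$, locating the first interior lattice point (the all-ones matrix at dilation $c=n$) and obtaining $\deg h^*_{\Sigma_n}=2\bigl(\binom{n}{2}+1\bigr)-n=n^2-2n+2$, which you then match against the values of $\deg h^*_{S_n}$ from Theorem~\ref{neven} and Proposition~\ref{nodd}. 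This is a genuinely different and arguably more transparent argument: it avoids invoking the symmetry of $h^*(\Sigma_n)$ as a black box and gives the exact degree of $h^*_{\Sigma_n}$ in closed form, at the cost of carefully tracking the denominator $D=2$ through the reciprocity substitution. The paper's approach, by contrast, is shorter once one has the earlier structural results in hand.
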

	
	\begin{proof}
		As power series, the coefficient of $t^m$ in $E_{S_n}(t)$ is the same as the coefficient of $t^{2m}$ in $E_{\Sigma_n}(t)$. Recalling Theorem~\ref{jia}, this gives
		$$E_{\Sigma_n}(t) = E_{S_n}(t^2) + t\sum_{m\geq 0}f(m)t^{2m}$$
		for some polynomial $f$. 
		So, as rational functions, the first summand of the above will have entirely even-degree terms in the numerator and the same denominator as the rational form of $E_{\Sigma_n}(t)$. 
		Thus, the second summand, when written to have a common denominator as the first summand, will have entirely odd-degree terms in its numerator.
		Therefore, $h^*(S_n)$ consists of the even-indexed entries of $h^*(\Sigma_n)$.
		
		Since $h^*(S_n)$ is symmetric for even $n$ only, and by Proposition~\ref{nodd}, the degrees of $h^*(\Sigma_n)$ follow.
	\end{proof}
	
\section{Toric Ideals and Regular, Unimodular Triangulations} \label{sec:triangulations}

	For a polytope $\polyp \subseteq \RR^n$ let $\polyp \cap \ZZ^n = \{a_1,\ldots, a_s\}$. We define the {\em toric ideal} of $\polyp$ to be the kernel of the map
	$$\pi: T_\polyp = k[t_1,\ldots,t_s] \to k[\polyp],$$
	where $\pi(t_i) = \left(\prod x^{a_i}\right)z$, using the multivariate notation. This ideal we denote $I_\polyp$. Because the lattice points of $S_n$ correspond to matrices, 
	it will sometimes be more convenient to use the indexing
	$$T_{S_n} = k[t_A | A \in S_n \cap \ZZ^{n\times n}] \mathrm{~and~} k[S_n] = k[x^Az^m | A \in mS_n \cap  \ZZ^{n\times n}],$$
	where we now use 
	$$x^Az^m = \prod_{0 \leq i,j \leq n}x_{ij}^{a_{i,j}}z^m$$
	with $A = (a_{i,j})$. Thus $\pi: T_{S_n} \to k[S_n]$ is given by $\pi(t_M) = x^Mz$.
	
	The toric ideal of a polytope has been widely studied, in large part for its connections to triangulations of the polytope. Various 
	properties of the initial ideal of $I_\polyp$ are equivalent to corresponding properties of the triangulation, with perhaps one of the most well-known connections being the following result.
	
	\begin{theorem}[Theorem 8.9, \cite{sturmfels}]
		Given a monomial ordering $\prec$ on $T_\polyp$, the initial ideal $\init (I_\polyp)$ is squarefree if and only if the corresponding regular 
		triangulation of $\polyp$ is unimodular.
	\end{theorem}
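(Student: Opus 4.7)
The plan is to prove the equivalence by routing both sides through the Stanley--Reisner ideal of the regular triangulation. First, I would set up the correspondence between the monomial ordering $\prec$ and a generic weight vector $\omega \in \RR^s$ that represents $\prec$ in the relevant degrees, so that $\init_\prec(I_\polyp) = \init_\omega(I_\polyp)$. The regular triangulation $\Delta_\omega$ of $\polyp$ is obtained by lifting each lattice point $a_i$ to $(a_i, \omega_i) \in \RR^{n+1}$, taking the lower envelope of the convex hull, and projecting the faces back to $\polyp$. The correspondence to record is that for any subset $F \subseteq \{a_1,\ldots, a_s\}$, the set $F$ is a face of $\Delta_\omega$ if and only if there is some linear functional attaining its minimum on the lifted points precisely at the lifts of $F$.

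Next, I would establish the radical identification $\sqrt{\init_\prec(I_\polyp)} = I_{\Delta_\omega}$, where $I_{\Delta_\omega}$ is the Stanley--Reisner ideal generated by the squarefree monomials $\prod_{i \in \tau} t_i$ with $\tau$ a non-face of $\Delta_\omega$. One direction: if $\tau$ is a non-face, the lifted points $\{(a_i,\omega_i) : i \in \tau\}$ are not contained in a common lower face, and this can be converted into an affine relation showing that $\prod_{i \in \tau} t_i \in \sqrt{\init_\prec(I_\polyp)}$. Conversely, any squarefree monomial lying in $\sqrt{\init_\prec(I_\polyp)}$ (up to radical) must correspond to a set of lattice points not supported on a common face of $\Delta_\omega$, by tracking the supports through minimal binomial generators of $I_\polyp$.

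Having this, the equivalence reduces to: $\init_\prec(I_\polyp) = \sqrt{\init_\prec(I_\polyp)}$ if and only if $\Delta_\omega$ is unimodular. For the ``if'' direction, suppose every maximal simplex $\sigma$ of $\Delta_\omega$ is unimodular. Then the semigroup $\ZZ_{\geq 0}\{a_i : a_i \in \sigma\}$ is saturated and freely generated, so every binomial in $I_\polyp$ can be rewritten, modulo a squarefree initial monomial, into a form whose leading term involves distinct variables only; after reducing the Gr\"obner basis this forces each initial monomial to be squarefree. For the ``only if'' direction, suppose some maximal simplex $\sigma = \conv\{a_{i_1},\ldots,a_{i_{d+1}}\}$ is not unimodular. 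Then there exists an interior lattice point of the half-open parallelepiped on $\{a_{i_k}\}$, giving rise to an integer relation $\sum c_k a_{i_k} = \sum c_k' a_{i_k}$ with all $c_k \geq 0$, some $c_k \geq 2$, and all $c_k' \geq 0$, that yields a binomial $\prod t_{i_k}^{c_k} - \prod t_{i_k}^{c_k'} \in I_\polyp$ whose support lies in the vertices of $\sigma$; since $\sigma$ is a face of $\Delta_\omega$, the weight $\omega$ can be used to show the leading monomial must be $\prod t_{i_k}^{c_k}$, which is not squarefree.

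The main obstacle will be the ``only if'' direction: producing the non-squarefree binomial and verifying that $\prec$ really selects the non-squarefree monomial as its leading term. This requires carefully exploiting that $\sigma$ is a lower face of the lifted polytope, so the weights $\omega_{i_k}$ lie on a common affine functional, and then using the non-trivial lattice point in the fundamental parallelepiped of $\sigma$ to produce a binomial with both monomials supported on $\{t_{i_k}\}$ and with one side genuinely non-squarefree. Once the binomial is in hand, the weighted-degree computation is forced by the fact that both sides have the same image under $\omega$ on the lifted simplex, so the tie-breaking provided by the monomial order $\prec$ determines which side leads, and either choice produces a non-squarefree initial monomial.
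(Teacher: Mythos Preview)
First, note that the paper does not prove this statement: it is quoted as Theorem~8.9 of Sturmfels and used only as background, so there is no proof in the paper to compare your attempt against. What you have sketched is an attempt at Sturmfels' theorem itself.

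Your outline correctly isolates the key intermediate identity $\sqrt{\init(I_\polyp)} = I_{\Delta_\omega}$, and the ``if'' direction, while vague, can be made to work: for a minimal non-face $\tau$ one places $\sum_{i\in\tau} a_i$ inside some maximal simplex $\sigma$, uses unimodularity of $\sigma$ to write it as a nonnegative integer combination of the vertices of $\sigma$, and checks via $\omega$ that the resulting binomial has $\prod_{i\in\tau} t_i$ as its initial term.

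The ``only if'' direction, however, has a genuine gap. You claim that a non-unimodular maximal simplex $\sigma=\conv{a_{i_1},\ldots,a_{i_{d+1}}}$ produces a nonzero binomial $\prod t_{i_k}^{c_k}-\prod t_{i_k}^{c_k'}$ with \emph{both} monomials supported on the vertices of $\sigma$. This cannot happen: the lifted points $(a_{i_k},1)$ are linearly independent, so any relation $\sum c_k a_{i_k}=\sum c_k' a_{i_k}$ with $\sum c_k=\sum c_k'$ forces $c_k=c_k'$ for every $k$, and your binomial is identically zero. The extra lattice point in the fundamental parallelepiped is precisely \emph{not} a nonnegative integer combination of the $a_{i_k}$; to obtain a binomial in $I_\polyp$ one must express it using at least one $a_j$ with $a_j\notin\sigma$, and even then one needs that point to lie in the semigroup generated by all of the $a_j$, which is not automatic in general. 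The standard argument sidesteps this entirely by comparing multiplicities: one has $\init(I_\polyp)\subseteq I_{\Delta_\omega}$, and $\deg\bigl(T_\polyp/\init(I_\polyp)\bigr)$ equals the normalized volume of $\polyp$ while $\deg\bigl(T_\polyp/I_{\Delta_\omega}\bigr)$ equals the number of maximal simplices of $\Delta_\omega$; these agree exactly when every maximal simplex has normalized volume one.
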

	
	In general, $\inrevlex(I_\polyp)$ cannot be guaranteed to be squarefree. This does not rule out the existence of $\inrevlex(I_\polyp)$
	being squarefree for {\em some} ordering of their lattice points, though this may require much more work; the generators of a toric ideal are notoriously difficult to compute in general.
	The following order we place on the lattice points of $S_n$ experimentally appears to provide enough structure to induce regular, unimodular triangulations.
	
	\begin{definition}\label{def:ordersn}
		We place a total order $<_{S_n}$ on the lattice points of $S_n$ by first setting $M <_{S_n} N$ if $M$ contains more 2s in its entries than $N$. 
		This creates a partial order on the lattice points of $S_n$; from this, any linear extension will result in a total order on the lattice points.
		For the remainder of this paper, we will denote any choice of these total orders by $<_{S_n}$.
		This class of orders induces a class of graded reverse lexicographic term orders $\prec_{S_n}$ on the variables of $T_{S_n}$, specifically $t_M \prec_{S_n} t_N$ if and only if $M <_{S_n} N$.
	\end{definition}
	
	We are now ready to prove Theorem~\ref{thm:mainthm}.
	
	\begin{proof}[Proof of Theorem~\ref{thm:mainthm}]
		First, let $\grob$ be the reduced Gr\"obner basis of $I_{S_n}$ with respect to any ordering. It is known to consist of binomials 
		itself. Suppose $\grob$ has a binomial $u - v$ with both terms containing squares, and $\pi(u) = \pi(v) = x^Az^k$. Note in particular that the variables in $u$ and $v$ 
		are distinct. Suppose $t_M$ and $t_N$ are the variables in the separate terms with powers greater than 1. Then $\pi(t_Mt_N)$ is the average of the points corresponding to $\pi(t_M^2)$ and 
		$\pi(t_N^2)$, thus is subtractable from $A$. By the integral closure of $S_n$, there is some third monomial $b$ 
		such that $\pi(t_Mt_Nb) = x^Az^k$. 
		So $u - t_Mt_Nb$ is in $I_{S_n}$; however, we can factor out $t_M$ from this to get $u - t_Mt_Nb = t_M(u_1 - u_2)$. 
		We may similarly factor $t_N$ from $v - t_Mt_Nb$ to get $t_N(v_1 - v_2)$, which must also be in $I_{S_n}$. 
		Therefore $u_1 - u_2$ and $v_1 - v_2$ must be in $I_{S_n}$ themselves, and $u - v$ can be written as
		\[
			u - v =  u - t_Mt_Nb + t_Mt_Nb - v = t_M(u_1 - u_2) - t_N(v_2 - v_1)
		\]
		which contradicts $\grob$ being reduced. Therefore no binomial in $\grob$ can have both terms containing a square.
		
		For the second property, we must show that, for any lattice point $M \in S_n$, we can find a second lattice point $N \in S_n$ such that $M + N$ can be represented in a second,
		distinct sum. Since these are degree 2, the relation must be recorded in $I_{S_n}$, meaning both terms appear individually in $\grob$ (even if not as part of the same binomial). 
		While this can be proven in terms of matrices, it will be easier to work in terms of graph labelings. 
		 		
		As we saw in Proposition~\ref{intclosed}, each lattice point $M \in S_n$ corresponds to a 2-factor $G_M$, a covering of $n$ vertices so that each vertex is incident to two edges. 
		Thus for each 2-factor $G_M$, we want to find a second 2-factor $G_N$ such that $G_M \cup G_N$ can be written as a union of 2-factors, each distinct from both $G_M$ and $G_N$.
		Each covering is a disjoint union of two possible connected components: first,
		a path, possibly of length 0, whose endpoints also have loops; second, a $k$-cycle for some $k \leq n$. This allows us to break the remainder of the proof into three cases.
		
		First suppose $G_M$ contains a path $v_1, v_2, \ldots, v_k$, $k > 1$, with loops at its endpoints. Set $G_N$ to be the graph agreeing with $G_M$ except on these vertices. 
		Here we place a single loop on each of $v_1$ and $v_k$, an edge between these two vertices, and two loops on each of $v_2, \ldots, v_{k-1}$. 
		The union $G_M \cup G_N$ can be decomposed appropriately as a cycle $v_1, v_2, \ldots, v_k, v_1$ and as two loops on each vertex.
		
		Next suppose that $G_M$ contains no such paths but does contain a cycle $v_1, v_2, \ldots, v_k, v_1$ for some $k \geq 2$. 
		Let $G_N$ be the cover with two loops on each $v_i$.
		Then $G_M \cup G_N$ decomposes as the path $v_1, \ldots, v_k$ with a loop on $v_1$ and $v_k$ as one covering and the other covering as the edge $v_1, v_k$ with loops $v_1, v_1$
		and $v_k, v_k$ along with two loops on all other vertices.
		
		If $G_M$ does not fit into either of the previous cases, then its connected components all consist of two loops on each of the $n$ vertices. 
		Form a new graph $G^{\prime}$ by setting it equal to $G_M$, except for two distinct vertices, $v_1$ and $v_2$.  
		Instead, place two edges between $v_1$ and $v_2$. 
		Then $G^{\prime}$ is also a 2-factor, and $G^{\prime} = G_N$ for some lattice point $N \in S_n$.  
		Moreover, the entries of both $M$ and $N$ consist of only zeros or twos, so their average $A = \frac{1}{2}(M+N)$ is a lattice point of $S_n$ distinct from both $M$ and $N$.
		So,  $G_M \cup G_N = G_A \cup G_A$.	
		This covers all cases, so the corresponding $M$ will always appear in a degree-two binomial of $\grob$.
		
		We restrict to the order $\ordersn$ and fix this order for the remainder of the proof. 
		For the third property, consider $t_Mt_N - t_Xt_Y \in \grob$. 
		Since we know one of the monomials must be squarefree, it is enough to check the case when the other monomial is a square square.
		Suppose $M = N$. This can only occur if $M$ is not a vertex; hence, $M$ is the midpoint of $X$ and $Y$. Thus if any entries of $M$ are 2, the corresponding
		entries of $X$ and $Y$ must also be 2. Since $X$ and $Y$ are distinct, though, they have distinct support. This implies that some entry of $M$ is 1, which arises from
		one of the corresponding entries of $X$ and $Y$ being 0 and the other being 2. So, one of $X$ or $Y$ will contain more twos than $M$, giving us 
		$\insn(t_Mt_N - t_Xt_Y) = -t_Xt_Y$. 
		
		Lastly, consider an arbitrary binomial $u - v$ of degree $k$ from $\grob$. If the initial term is the squarefree term, then it is certainly cubefree.
		Otherwise, the binomial is of the form $t_{A_1}^{a_1}\cdots t_{A_r}^{a_r} - t_{B_1}\cdots t_{B_k}$, with $\insn(u-v) = u = t_{A_1}^{a_1}\cdots t_{A_r}^{a_r}$ and each $a_i \geq 1$.
		Since we are using the order $\ordersn$, one of the variables of $v = t_{B_1}\cdots t_{B_k}$ is less than all variables in $u$; without loss of generality, assume this variable is $t_{B_1}$.
		
		Choose a nonzero entry of $B_1$. There will be some variable $t_{M_1}$ such that $M_1 \in \{A_1,\ldots,A_r\}$ and $M_1$ is also nonzero in the same position.
		Now, choose a nonzero entry of $B_1$ such that the position is zero in $A_1$.
		Then we know there is some variable $t_{M_2}$ such that $M_2 \in  \{A_1,\ldots,A_r\} \setminus \{M_1\}$ and $M_2$ is nonzero in this new position.
		Repeating this process gives a monomial $t_{M_1}\cdots t_{M_s}$ such that $M = M_1 + \cdots + M_s$ is nonzero whenever $B_1$ is nonzero.
		If there are any positions that are 2 in $B_1$ and 1 in $M$, then square a variable of $t_{M_1}\cdots t_{M_s}$ whose corresponding matrix is nonzero in that position.
		Repeat on distinct variables if necessary.
		
		The resulting monomial, which we will call $m_1$, is cubefree, and there is some second monomial $m_2$ such that $m_1 - t_{B_1}m_2 \in I_{S_n}$.
		Because $t_{B_1}$ was chosen to be less than all the variables $t_{A_1},\ldots,t_{A_r}$, we know that $\insn(m_1 - t_{B_1}m_2) = m_1$, which divides
		$t_{A_1}^{a_1}\cdots t_{A_r}^{a_r}$
		Since our chosen binomial is in a reduced Gr\"obner basis, the two must be equal. 
		Therefore, every initial term of a binomial in $\grob$ is cubefree.		
	\end{proof}
	
	If the initial terms of $\grob$ with respect to $\ordersn$ can be proven to be squarefree, then the following conjecture holds.
	
	\begin{conjecture}\label{thm:mainconjecture}
		$S_n$ has a regular, unimodular triangulation, hence $h^*(S_n)$ is unimodal when $n$ is even.
	\end{conjecture}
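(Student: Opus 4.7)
The plan is to supply the missing ingredient identified by the author just before the conjecture: strengthening the cubefree conclusion of Theorem~\ref{thm:mainthm}(4) into the statement that $\insn(g)$ is squarefree for every $g$ in the reduced Gr\"obner basis $\grob$ of $I_{S_n}$ with respect to $\ordersn$. Given this strengthening, Theorem~8.9 of \cite{sturmfels} immediately yields that the regular triangulation of $S_n$ associated to $\ordersn$ is unimodular, proving the first half of the conjecture. For the second half, Theorem~\ref{neven} says that when $n$ is even, $S_n$ is Gorenstein and shares its $h^*$-vector with an integrally closed reflexive projection; applying the theorem of Bruns and R\"omer from \cite{BrunsRomer} that a Gorenstein polytope admitting a regular unimodular triangulation has a unimodal $h^*$-vector then gives the desired unimodality of $h^*(S_n)$.

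For the squarefree step, I plan to induct on the degree $k$ of $g = u - v \in \grob$, with the base case $k = 2$ given by Theorem~\ref{thm:mainthm}(3). For $k > 2$, suppose for contradiction that $\insn(g) = u = t_{M_1}^2 t_{M_2}^{a_2} \cdots t_{M_r}^{a_r}$, where each $a_i \in \{1,2\}$ by cubefreeness and $v$ is squarefree by part (1). When $M_1$ is not a vertex of $S_n$, it is the midpoint of two distinct lattice points $N_1, N_2$; by the analysis in the proof of Theorem~\ref{thm:mainthm}(3), the binomial $t_{N_1}t_{N_2} - t_{M_1}^2 \in I_{S_n}$ has $\ordersn$-initial $t_{N_1}t_{N_2}$. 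Setting $w = t_{M_2}^{a_2} \cdots t_{M_r}^{a_r}$, the element $wt_{N_1}t_{N_2} - u \in I_{S_n}$ then has initial $wt_{N_1}t_{N_2}$, and reducing it against $\grob$ should — via careful tracking of the resulting monomials under $\ordersn$ — either force a proper divisor of $u$ into $\insn(I_{S_n})$, contradicting the minimality of $u$ as a generator of the initial ideal (a consequence of reducedness), or produce a lower-degree binomial in $\grob$ with a non-squarefree initial term, contradicting the inductive hypothesis.

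The main obstacle is twofold. First, the reduction sketched above does not directly address the case when $M_1$ is a vertex of $S_n$, since then no degree-two binomial of the form $t_{M_1}^2 - (\cdots)$ exists in $I_{S_n}$. One must instead work with a degree-two relation involving $t_{M_1}$ and a distinct variable $t_N$, whose existence is guaranteed by Theorem~\ref{thm:mainthm}(2) through the three-case graph-theoretic construction in its proof, and argue that iterating such relations eventually removes the square. Second, in both cases one must show that the successive exchanges terminate without introducing new squares elsewhere. A natural candidate monovariant is the multiset of ``2-counts'' of the matrices $M_i$ indexing the variables in $u$, which is precisely the statistic governing $\ordersn$; making this termination argument work uniformly while respecting the linear extension chosen in Definition~\ref{def:ordersn} is the delicate point that I expect to be the principal technical challenge. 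Once this is overcome, both conclusions of Conjecture~\ref{thm:mainconjecture} follow by the reductions described above.
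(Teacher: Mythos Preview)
The statement you are attempting to prove is labeled a \emph{conjecture} in the paper, and the paper contains no proof of it. Immediately after stating Conjecture~\ref{thm:mainconjecture}, the author writes only that the unimodality assertion would follow from Theorem~1 of \cite{BrunsRomer} \emph{if} the squarefreeness could be established, and then explains precisely why the method of Theorem~14.8 in \cite{sturmfels} breaks down here: when an entry of $B_1$ equals $2$ but the matching entry of some $A_j$ is only $1$, there is no guarantee that a second $A_{j'}$ is nonzero in that position, so one cannot build the required squarefree monomial dominating $B_1$. There is therefore no ``paper's proof'' for you to be compared against.

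As for the plan itself, the gaps you flag are genuine and are essentially the same obstruction the paper records. In the non-vertex case, knowing that $\insn(t_{N_1}t_{N_2}-t_{M_1}^2)=t_{N_1}t_{N_2}$ tells you that $wt_{N_1}t_{N_2}$ lies in the initial ideal, but this is a \emph{larger} monomial than $u=t_{M_1}^2w$; it neither divides $u$ nor forces a proper divisor of $u$ into $\insn(I_{S_n})$, so no contradiction with minimality of $u$ follows. In the vertex case the situation is worse: the only degree-two relations involving $t_{M_1}$ are of the form $t_{M_1}t_N-t_Xt_Y$ with $N\neq M_1$, and multiplying such a relation into $u$ cannot lower the exponent of $t_{M_1}$ below $2$. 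Your proposed monovariant (the multiset of $2$-counts) governs $\ordersn$ only up to the unspecified linear extension, so it does not by itself give a terminating rewriting procedure. In short, your outline reproduces exactly the strategy the author already pursued to obtain cubefreeness, together with the heuristic hope that it can be pushed further; the paper explicitly leaves this open.
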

	
	The second statement of the conjecture would follow due to Theorem~1 of \cite{BrunsRomer}.

	The last part of the previous proof adapts the method used in Theorem~14.8 of \cite{sturmfels} to show that $I_{B_n}$ has a squarefree initial ideal for any reverse lexicographic ordering.
	However, we cannot continue to adapt this proof so simply at this point: although one of the matrices $A_j$ coming from $u$ may be nonzero in a position that $B_1$ is also nonzero,
	the entry may be 1 in $A_j$ and 2 in $B_1$, and there is a priori no indication that any other variable corresponds to a matrix with a nonzero entry in the same position.

\section{Future Directions, Questions, and Conjectures}
	
	Experimental data and the results we have shown lead to some natural questions and conjectures.

	\begin{conjecture} Let $\grob$ be the reduced Gr\"obner basis of $I_{S_n}$, and let $g \in \grob$ with $\deg g \geq 3$.
		\begin{enumerate}
			\item The matrix corresponding to the monomials in $g$ does not have a block form. That is, the corresponding graph is connected.
			\item The matrix corresponding to the monomials in $g$ has a decomposition into lattice points of $S_n$ such that one summand consists of only ones and zeros.
		\end{enumerate}
	\end{conjecture}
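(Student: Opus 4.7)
The plan is to prove part (1) by contradiction using lower-degree reductions inside $\grob$, and to prove part (2) constructively by combining part (1) with a refinement of Petersen's theorem.

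For part (1), suppose $g = u - v \in \grob$ has $\deg g \geq 3$ and the multigraph $G_A$ (with $\pi(u) = \pi(v) = x^A z^k$) is disconnected, with components $V_1$ and $V_2$. Every 2-regular cover of $G_A$ must stay within the edge set of $G_A$, so each variable $t_M$ appearing in $u$ or $v$ is block-diagonal, $M = M|_1 \oplus M|_2$. Given two variables $t_{M_a}, t_{M_b}$ inside $u$, the swap $M_a' = M_a|_1 \oplus M_b|_2$, $M_b' = M_b|_1 \oplus M_a|_2$ produces a degree-2 relation $t_{M_a}t_{M_b} - t_{M_a'}t_{M_b'} \in I_{S_n}$, which is non-trivial precisely when $M_a|_1 \neq M_b|_1$ and $M_a|_2 \neq M_b|_2$. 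When such a non-trivial internal swap exists and its initial term equals $t_{M_a}t_{M_b}$, that initial term divides $u$, and since it lies in $\insn(I_{S_n})$ it is in turn divisible by $\insn(g')$ for some $g' \in \grob$ with $\deg g' \leq 2$, contradicting that $\grob$ is reduced. A short combinatorial lemma shows that if every internal swap within $u$ is trivial, then either all $M_i|_1$ coincide or all $M_i|_2$ coincide; the same dichotomy holds for $v$. In every such degenerate configuration, $u - v$ descends to a non-trivial relation in the smaller toric ideal $I_{S_{|V_r|}}$ for $r = 1$ or $2$, and induction on $n$ produces the desired reducing element.

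For part (2), assume by part (1) that $G_A$ is connected and $2k$-regular. The strategy is to extract a single \emph{simple} 2-factor $M_1 \subseteq G_A$---a spanning 2-regular subgraph with no multi-edges and at most one loop per vertex---after which the residual matrix $A - M_1$ is $2(k-1)$-regular with non-negative integer entries and, by Proposition~\ref{intclosed}, decomposes into $k-1$ further lattice points of $S_n$. I would begin with an arbitrary 2-factor of $G_A$ supplied by Petersen's theorem and iteratively eliminate each multi-edge or doubled loop by swapping along an augmenting path in $G_A$, using surplus edge multiplicity elsewhere in $G_A$ to absorb the modification.

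The principal obstacles are the following. In part (1), the interplay between the swap binomial and the term order $\ordersn$ requires a careful choice of $M_a, M_b$ to guarantee that the initial term of $t_{M_a}t_{M_b} - t_{M_a'}t_{M_b'}$ is the one dividing $u$; the mixed degenerate case in which, say, all $M_i|_1$ agree but all $N_j|_2$ agree needs a delicate induction using a term order on the subordinate ideal compatible with $\ordersn$. In part (2), classical Petersen's theorem does not distinguish simple from non-simple 2-factors, so the main content is the existence of a simple 2-factor inside an arbitrary connected $2k$-regular multigraph of the form $G_A$; the extra multiplicity structure imposed by $g \in \grob$, for example via Theorem~\ref{thm:mainthm}(4) capping exponents in $\insn(g)$, may be needed to bound the entries of $A$ enough to guarantee the extraction succeeds.
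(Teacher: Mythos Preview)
The statement you are attempting to prove is listed in the paper as a \emph{conjecture}; the paper offers no proof, and in fact remarks that a proof of part~(2) would settle Conjecture~\ref{thm:mainconjecture}, which is the paper's main open problem. So there is no ``paper's own proof'' to compare against, and your proposal should be read as an attempt to resolve an open question.

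On its own merits, the proposal has substantive gaps in both parts. In part~(1), the crux is never established: you need, for \emph{some} pair of variables in $u$, that the swap binomial has its initial term on the $u$-side. You acknowledge this but give no mechanism; for a general term order (the conjecture does not restrict to $\ordersn$) there is no a~priori reason one orientation is preferred, and even under $\ordersn$ the count of $2$'s in $M_a',M_b'$ versus $M_a,M_b$ can go either way depending on how the blocks recombine. The degenerate-case analysis is also incomplete: your ``short combinatorial lemma'' is asserted, not proved, and in the mixed case where the $u$-factors share a common $V_1$-block while the $v$-factors share a common $V_2$-block, it is unclear how the relation descends to a single smaller $I_{S_{|V_r|}}$, or how the induced term order on that subring relates to the ambient one so that reducedness transfers.

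Part~(2) reduces to the claim that every connected $2k$-regular multigraph (with loops) arising as $G_A$ for some $g\in\grob$ admits a $2$-factor whose adjacency matrix is $0$/$1$. This is not a consequence of Petersen's theorem, and your ``augmenting path'' repair of an arbitrary $2$-factor is a heuristic: swapping along a path to remove one multi-edge can create another, and you give no potential function guaranteeing termination. You suggest invoking the cubefreeness bound from Theorem~\ref{thm:mainthm}(4) to cap entries of $A$, but that bound constrains exponents of variables in $\insn(g)$, not entries of the matrix $A$ itself, so it does not directly limit edge multiplicities in $G_A$. Absent a genuine existence argument for the simple $2$-factor, part~(2) remains open---which is exactly the status the paper assigns it.
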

	If the second part of this conjecture holds, then Conjecture~\ref{thm:mainconjecture} holds as well.
	
	To prove that an initial term of a binomial is squarefree, one strategy would be to prove that both monomials are squarefree.
	We propose a term order on $T_{S_n}$ that is a refinement of $\ordersn$ and appears to hold this behavior.
	
	\begin{conjecture}
		Set $t_M > t_N$ if the matrix $M$ contains more twos than $N$.
		If neither contains a two, then set $t_M > t_N$ if $M$ contains more zeros. 
		Then create a total order through taking a linear extension as in Definition~\ref{def:ordersn}.
		This refinement induces an order such that $\grob$ consists of binomials of degree at most $n-1$, and the binomials of degree greater than 2 are squarefree in both terms.
	\end{conjecture}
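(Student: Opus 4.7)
The conjecture has two parts: (A) every binomial in $\grob$ has degree at most $n-1$, and (B) every binomial of degree greater than $2$ is squarefree in both terms. My plan is to attack (B) by strengthening the argument used for Theorem~\ref{thm:mainthm}(1) using the refined tie-breaker, and to attack (A) by a combinatorial induction on the 2-factor encoding of lattice points.

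For (B), let $u - v \in \grob$ have degree $k \geq 3$. By Theorem~\ref{thm:mainthm}(1), one of $u, v$ is squarefree; say $v$ is. Suppose for contradiction that $t_A^2$ divides $u$ for some lattice point $A \in S_n$. I want to produce a second decomposition $A_1 + A_2 = 2A$ with both $A_i$ distinct from $A$ and with $t_{A_1} t_{A_2}$ strictly smaller than $t_A^2$ in the refined order, so that $t_A^2 - t_{A_1} t_{A_2} \in I_{S_n}$ reduces $u$ and contradicts the reducedness of $\grob$. Realize $A$ as the 2-factor $G_A$ from Proposition~\ref{intclosed}. Whenever $G_A$ contains a cycle of length $\geq 2$ or a path component of length $\geq 1$ with loops at its endpoints, one obtains $A_1, A_2$ by an edge-swap in the doubled multigraph $2G_A$: the resulting 2-factors are strictly sparser than $A$ (contain strictly more zeros) while carrying no fewer $2$'s, so the refinement's tie-breaker on the number of zeros places $t_{A_1}t_{A_2}$ strictly below $t_A^2$.

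The degenerate case is when every component of $G_A$ is a pair of loops at a single vertex, i.e., $A = 2P$ for a symmetric permutation matrix $P$. There $2A = A + A$ is the unique decomposition in $S_n$, so no direct exchange of this form exists. To handle this case I would exploit the extra factor of $u$: since $\deg u \geq 3$, write $u = t_A^2 \cdot t_{A'} \cdot w$. The sparsity of $A$ (entries $0$ or $2$ only) makes $A + A'$ easy to rearrange, and Theorem~\ref{thm:mainthm}(2) supplies a quadratic binomial $t_A t_{A'} - t_B t_{B'} \in \grob$. The plan is to show that the $B_i$ produced inherit at least as many $2$'s as $A, A'$ and strictly more zeros, so the refined order gives $t_B t_{B'} \prec t_A t_{A'}$; applying this rewrite to $u$ then eliminates one factor of $t_A$ and contradicts reducedness. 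This degenerate case is the main subtlety in proving (B).

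For (A), the plan is to induct on $k$. A degree-$k$ binomial in $\grob$ records two distinct $2$-factorizations of a fixed $2k$-regular symmetric multigraph on $n$ vertices. I would argue that when $k \geq n$ a pigeonhole on the $n$ vertices forces some pair of vertices to carry a local pattern matching a quadratic element of $\grob$ (which exists by Theorem~\ref{thm:mainthm}(2)); rewriting through this quadratic produces a shorter syzygy, contradicting reducedness. The hard part here — and the main obstacle in the whole conjecture — is making the pigeonhole tight at exactly $n$: one needs to identify an extremal configuration achieving degree $n-1$ (experimentally these appear to come from full ``cycle'' relations on all $n$ vertices) and to show that the refined order's secondary criterion prevents any further enlargement. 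I expect (B) to be accessible once the all-loop case is carefully treated, while (A) will require a genuinely new combinatorial lemma about overlapping symmetric 2-factorizations.
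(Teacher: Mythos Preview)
The statement you are attempting to prove is listed in the paper as a \emph{conjecture}; the paper offers no proof, only experimental evidence. So there is nothing to compare your argument against, and the relevant question is whether your proposal actually closes the problem. It does not, for the following reasons.

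Your attack on (B) hinges on the claim that whenever $G_A$ contains a cycle or a nontrivial path, the doubled graph $2G_A$ admits a second $2$-factorization $A_1+A_2$ with $A_1,A_2\neq A$. This is false already in the most common situation. Take $A$ a $0$--$1$ matrix whose graph $G_A$ is a single $n$-cycle. Then $2G_A$ is the doubled $n$-cycle, and a short case check shows its \emph{only} $2$-factor is the cycle itself, so $2A=A+A$ is the unique decomposition in $S_n$. Thus no quadratic binomial $t_A^2 - t_{A_1}t_{A_2}$ exists at all, and your reduction of $u$ cannot even begin. This is not the ``degenerate'' case you isolated (all-loop $A$); it is the generic $0$--$1$ case, and it is precisely where the refined tie-breaker is supposed to do its work.

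Even setting that aside, your use of the order is inverted. In the refined order, more zeros among $2$-free matrices makes the variable \emph{larger}, not smaller; and since every $0$--$1$ lattice point of $S_n$ has exactly two $1$'s per row, all such matrices have the \emph{same} number of zeros, so the secondary tie-breaker is vacuous among them. Your assertion that the $A_i$ are ``sparser'' and hence sit below $t_A$ in the graded reverse lexicographic order therefore has no content. Finally, part (A) is only a heuristic: the pigeonhole you describe does not by itself produce a quadratic syzygy compatible with the term order, and you have not exhibited the extremal degree-$(n-1)$ relations nor shown that nothing longer survives reduction. As the paper itself leaves this open, a genuine proof will require new ideas beyond what you have sketched.
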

	
	Another modification that can be made to $\Sigma_n$ is the following. 
	Denote by $P_n$ the convex hull of the lattice points in $\Sigma_n$. In general, $P_n$ is neither Gorenstein nor integrally closed. 
	However, based on experimental data, we conjecture the following.
	
	\begin{conjecture}
		For all $n$, $h^*(P_n)$ is unimodal.
	\end{conjecture}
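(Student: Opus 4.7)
The plan is to adapt the machinery used for $S_n$ in this paper while confronting the main new obstacle: the paper already notes that $P_n$ is neither integrally closed nor Gorenstein in general. First I would nail down the basic structure of $P_n$. Its vertices are precisely the symmetric permutation matrices, equivalently the involutions of $[n]$ (in bijection with partial matchings on $[n]$ plus a choice of fixed points), and its dimension is $\binom{n}{2}$ since $P_n$ shares an affine hull with $\Sigma_n$. Computing $h^*(P_n)$ for small $n$ with Normaliz or LattE would both verify the conjecture in low dimension and, ideally, suggest a combinatorial interpretation of the coefficients as a statistic on involutions (for example, one pulled back from standard Young tableaux via Robinson--Schensted).

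Next I would pursue two parallel strategies. The first, algebraic, is to construct a regular triangulation of $P_n$ analogous to the ones built from the orders $\ordersn$ in Section~\ref{sec:triangulations}, using a total order on involutions that refines the partial order by number of fixed points. Because the failure of integral closure rules out a unimodular triangulation with vertices in $P_n \cap \ZZ^{n^2}$, I would instead aim for a triangulation whose simplices admit box decompositions with unimodal local $h$-polynomials in the sense of Katz--Stapledon, so that $h^*(P_n)$ inherits unimodality summand by summand. The second strategy, equivariant, exploits the conjugation action of $S_n$ on $P_n$: Stapledon's equivariant Ehrhart theory decomposes $h^*(P_n)$ as a non-negative combination of irreducible $S_n$-characters, and one can then attempt to establish unimodality character-by-character using hard Lefschetz or representation-stability techniques.

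The hardest step will be controlling the ``missing'' lattice points of $mP_n$, namely those not expressible as a sum of $m$ vertices; these obstruct any direct reduction to generating vertex data. A natural approach is to sandwich $P_n$ between $\Sigma_n$ and an integrally closed subpolytope, and isolate the contribution of each missing lattice point to $h^*(P_n)$ through a signed dissection formula. Even once such a decomposition is in hand, proving unimodality of the resulting sum will almost certainly require a genuinely combinatorial interpretation of $h^*(P_n)$, since the reflexive and Gorenstein shortcuts available for $S_n$ in even dimension, as well as the Bruns--R\"omer reduction of Lemma~\ref{reflexive}, are unavailable here; any proof will therefore demand new techniques tailored to the non-integrally-closed setting.
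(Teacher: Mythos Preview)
The statement you are addressing is a \emph{conjecture} in the paper, and the paper offers no proof of it; it is explicitly listed in the final section among open problems, with the remark that $P_n$ is in general neither Gorenstein nor integrally closed and that ``another approach is necessary.'' There is therefore nothing in the paper to compare your argument against.

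More to the point, what you have written is not a proof but a research outline. You describe two possible strategies (a Katz--Stapledon box-decomposition analysis of a hypothetical triangulation, and an equivariant Ehrhart decomposition) without carrying out a single step of either, and you yourself flag the central obstruction---controlling the lattice points of $mP_n$ not expressible as sums of $m$ vertices---as ``the hardest step,'' with no indication of how to resolve it. Several of the proposed ingredients are speculative even as strategy: there is no guarantee that a triangulation with unimodal local $h$-polynomials exists, no mechanism offered for why character-by-character unimodality should hold, and no concrete candidate for the combinatorial statistic on involutions. As written, the proposal does not establish the conjecture and does not advance beyond the state of knowledge recorded in the paper.
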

	
	Many methods for showing unimodality aim to show that the $h^*$-vector of a polytope
	is the same as the $h$-vector of a simplicial polytope, which has a symmetric $h$-vector.
	However, another approach is necessary for $P_n$, as well as $S_n$ for odd $n$, since neither are Gorenstein.

	Instead of looking at all lattice points of $S_n$, one can form triangulations using only the vertices. These will not be unimodular triangulations, but they might lead to something interesting.
			
	\begin{conjecture}
		For $n \geq 2$, any reverse lexicographic initial ideal of the toric ideal $I_{S_n}$ (using only the vertices of $S_n$) is
		generated by monomials of degree $3(n-2)$, and its minimal generators are $n$-free. That is, the minimal generators are not divisible by $t_i^n$ for any variable $t_i$.  
	\end{conjecture}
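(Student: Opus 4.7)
My plan is to interpret each vertex of $S_n$ as a $2$-regular multigraph on $n$ labeled vertices whose connected components are loops, double edges, or simple cycles of length at least $3$, so that a binomial $t^{\mathbf{a}} - t^{\mathbf{b}} \in I_{S_n}$ records two multisets of $2$-factors with the same weighted-adjacency-matrix sum. The study of generators then reduces to a combinatorial analysis of how $2$-factor decompositions of a fixed multigraph can disagree, and the main tool throughout will be swap arguments on these decompositions, in the spirit of the proof of Theorem~\ref{thm:mainthm}.

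First I would construct an explicit family of degree-$3(n-2)$ binomials in $I_{S_n}$ by iterating the prototype triangle relation
\[
t_{v_1} t_{v_2} t_{v_3} - t_{v_0} t_{v_4}^2
\]
that lies in $I_{S_3}$ and encodes the two obvious $2$-factor decompositions of the all-twos $3 \times 3$ matrix. Embedding this relation into $S_n$ via $n-3$ extra fixed loops already produces a binomial of degree $3$ in $I_{S_n}$, and combining $n-2$ such embedded swaps on a carefully chosen family of triangles in $K_n$ should yield binomials of degree exactly $3(n-2)$ whose initial terms, under any reverse lexicographic order, have no variable raised to a power reaching $n$, settling the $n$-freeness condition for these explicit generators.

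Next I would show that these binomials generate the initial ideal. The plan is a reduction argument: given any binomial of larger degree in $I_{S_n}$, localize a disagreement between its two decompositions to a subconfiguration where a triangle swap applies, and reduce modulo the constructed family. A natural structural lemma to aim for is the earlier conjecture in this section predicting that higher-degree reduced Gr\"obner basis elements correspond to connected multigraphs admitting a $0/1$ summand. For the $n$-freeness of an arbitrary minimal generator, I would argue by contradiction: if $t_M^n$ divided a minimal initial monomial, Petersen's $2$-factor theorem applied to the $2n$-regular multigraph $nG_M$ would produce a decomposition into $n$ distinct $2$-factors not all equal to $G_M$, yielding a binomial with strictly smaller $t_M$-multiplicity whose initial term divides the original one, contradicting minimality.

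The hard part will be controlling the low-degree binomials. Explicit relations such as $t_{\id} t_{(12)(34)} - t_{(12)} t_{(34)} \in I_{S_4}$ (which equates the all-loops matrix plus a double-edge pair with two disjoint double-edge-plus-loops matrices) show that binomials of degree strictly smaller than $3(n-2)$ do arise in $I_{S_n}$; the content of the conjecture is that their initial terms are nonetheless divisible by the initial terms of the triangle-chain family. Verifying this will likely require a case analysis organized by the partition of $\{1,\ldots,n\}$ induced by the connected components of the multigraph supporting a relation, together with a careful calibration of the reverse lexicographic order (perhaps in the spirit of $\ordersn$) to ensure that the initial term of every low-degree binomial is captured by the triangle family.
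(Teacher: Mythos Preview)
The statement you are addressing is a \emph{conjecture} in the paper, not a theorem; the paper offers no proof. The only evidence the paper cites is an exhaustive computer verification for $n=3$, and it explicitly notes that higher $n$ are computationally out of reach. So there is no argument in the paper to compare your proposal against.

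As for the proposal itself, it is a strategy sketch rather than a proof, and several of the steps have genuine gaps. Two stand out. First, your $n$-freeness argument invokes Petersen's theorem on $nG_M$ to produce an alternative $2$-factor decomposition, but Petersen only guarantees \emph{some} decomposition into $2$-factors; it could simply return $G_M$ repeated $n$ times, and even if it does not, the resulting $2$-factors need not be \emph{vertices} of $S_n$ (recall the conjecture restricts to the sub-semigroup generated by vertices, so non-vertex lattice points are unavailable as summands). Second, the heart of the conjecture is that the initial ideal is generated \emph{in degree exactly} $3(n-2)$, which in particular forces every lower-degree binomial's initial term to be divisible by one of your constructed generators. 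You acknowledge this is ``the hard part'' and defer it to an unspecified case analysis; but this is precisely the content of the conjecture, and nothing in the triangle-swap framework you set up explains why, say, the degree-$2$ relations you exhibit in $I_{S_4}$ should have their initial terms absorbed by degree-$6$ monomials. Without a concrete mechanism for that absorption, the proposal does not get past the starting line on the main claim.
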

	
	The conjecture is experimentally true for $n=3$ by an exhaustive search. Higher dimensions result in exponentially increasing numbers of vertices, vastly increasing 
	the computational difficulty of experimentation.

\bibliographystyle{plain}
\bibliography{davis}

\end{document}